\theoremstyle{plain}
\newtheorem{thm}{Theorem}
\theoremstyle{plain}
\newtheorem{lemma}{Lemma}
\theoremstyle{plain}
\theoremstyle{plain}
\theoremstyle{plain}
\newtheorem{fact}{Fact}
\theoremstyle{definition}
\theoremstyle{definition}
\providecommand{\Acknowledgements}[1]
{
  \small	
  \textbf{\textit{Acknowledgements---}} #1
}
\title{The Bernoulli property for counter-twisting linked twist maps}
\author{J. Myers Hill  \\
         \small School of Mathematics, University of Leeds, Leeds LS2 9JT, United Kingdom \\
         \small E: j.d.myershill@leeds.ac.uk
}
\pgfplotsset{compat=1.17}
\begin{document}

\date{}

\maketitle

\begin{abstract}
We prove the Bernoulli property for a class of counter-twisting linked twist maps. These compose orthogonal linear shears on the torus, orientated in the opposite sense to their co-twisting counterparts (where the shears reinforce one another). Compared to previous studies we focus on the parameter space corresponding to weak shears, near the critical parameter below which hyperbolicity is lost and the map is non-mixing. The approach developed to deal with this situation appears applicable to a broad range of non-uniformly hyperbolic examples.
\end{abstract}

\Acknowledgements{JMH supported by EPSRC under Grant Ref. EP/W524372/1.}


\section{Introduction}
\label{sec:intro}

A well known example of non-uniform hyperbolicity, linked twist maps \cite{burton_ergodicity_1980,wojtkowski_linked_1980,przytycki_ergodicity_1983,sturman_mathematical_2006,springham_ergodic_2008} (hereafter LTMs) arise in a variety of applications. As models of chaotic advection in the presence of boundaries, their dynamics are central to problems in laminar mixing (see \cite{sturman_mathematical_2006} and the references therein) and other physical phenomena \cite{devaney_subshifts_1978,sivaramakrishnan_linked_1989}. More recently \cite{hasselblatt_hyperbolicity_2023} drew a connection between LTMs and certain contact flows \cite{foulon_orbit_2021}.

We adopt the general form of a linear toral LTM from \cite{springham_ergodic_2008}. Fix four constants $0\leq x_1<x_2<1$, $0 \leq y_1<y_2<1$ and parameterise the torus by $(x,y)\in S^1 \times S^1$. Let $f:[y_0,y_1] \to S^1$ and $g:[x_0,x_1] \to S^1$ be given by
$f(y) = (y-y_0)/(y_1-y_0)$ and $g(x) = (x-x_0)(x_1-x_0)$. Defining horizontal and vertical annuli $P = \{ (x,y) \, | \, y_0 \leq y \leq y_1 \}$ and $Q =\{ (x,y) \, | \, x_0 \leq x\leq x_1 \}$, union $R= P \cup Q$, let $F,\tilde{G}:R \to R$ be given by
\[F(x,y) = \begin{cases} (x+f(y),y) & (x,y) \in P, \\ (x,y) & \text{otherwise,} \end{cases} \quad \tilde{G}(x,y) = \begin{cases} (x,y+g(x)) & (x,y) \in Q, \\ (x,y) & \text{otherwise,} \end{cases} \]
with the coordinates calculated modulo 1. For integers $k,l \neq 0$, the composition $H_{k,l} = \tilde{G}^l \circ F^k$ forms a continuous piecewise-linear Lebesgue measure preserving transformation on $R$.

We focus on the case of \emph{counter-twisting} LTMs where $k$ and $l$ have opposite signs. Proving mixing properties of co-twisting LTMs (with $kl>0$) is more straightforward; see \cite{burton_ergodicity_1980,wojtkowski_linked_1980}. Defining constants $\alpha=kf' = k/(y_0-y_1)$ and $\beta = lg' = l/(x_1-x_0)$, \cite{przytycki_ergodicity_1983} showed that if $|k|,|l| \geq 2$ and $\alpha\beta<-C \approx -17.244$ then $H_{k,l}$ is Bernoulli. Recently \cite{pathak_ergodicity_2023} revisited the problem, removing the constraint $|k|,|l| \geq 2$ and proving mixing properties up to ergodicity for $\alpha\beta<-C \approx -12.04$. Here we continue this effort, focusing on the case of single twists $|k| = |l| = 1$ of weak strength $|\alpha|,|\beta| \approx 2$. As in \cite{przytycki_ergodicity_1983,pathak_ergodicity_2023} we rescale so that $|\alpha| = |\beta|$. Without loss of generality we take $k=1$, $l=-1$, and shift $R$ so that $x_0=y_0= 0$, giving $x_1=y_1=1/\alpha$. Writing $G = \tilde{G}^{-1}$, the map $H = G \circ F$ is then parameterised by a single positive parameter $\alpha$. A sketch is given in Figure \ref{fig:simpleLTM}. Our main theorem is as follows:

\begin{figure}
    \centering
 \begin{tikzpicture}
    \tikzmath{\a = 2.4;}
 \node at (-5.2,0) {    
\begin{tikzpicture}[scale=1.6]
\draw (0,0) -- (0,3) -- (3/\a,3) -- (3/\a,3/\a) -- (3,3/\a) -- (3,0) -- (0,0);
\draw (0,3/\a) -- (3/\a,3/\a) -- (3/\a,0);

\node at ({3/(2*\a)},{(3 + 3/\a)/2}) {$R \setminus P$};
\node at ({3/(2*\a)},{3/(2*\a)}) {$P \cap Q$};
\node at ({(3 + 3/\a)/2},{3/(2*\a)}) {$R \setminus Q$};
\draw[very thick,dashed] (3/\a,0) -- (3,0);
\draw[very thick,dashed] (3/\a,3/\a) -- (3,3/\a);

\draw[very thick,dashed] (3/\a,{3/(2*\a)}) -- (1.5,{3/(2*\a)});
\draw[very thick,dashed] (3,{3/(2*\a)}) -- (1.5+3/\a,{3/(2*\a)});

\node at ( { (3/\a +  1.5 )/2 } , {3/(2*\a) + 0.15} ) {$L_2^\star$};
\node at ( { (3 +  1.5+3/\a )/2 } , {3/(2*\a) + 0.15} ) {$L_2$};

\node at ({(3 + 3/\a)/2},0.15) {$L_1$};
\node at ({(3 + 3/\a)/2},3/\a +0.15) {$L_1^\star$};

\end{tikzpicture}};    
    
 \node at (0,0) {    
\begin{tikzpicture}[scale=1.6]
\draw (0,0) -- (0,3) -- (3/\a,3) -- (3/\a,3/\a) -- (3,3/\a) -- (3,0) -- (0,0);
\draw (0,3/\a) -- (3,3/\a) -- (0,0);
\foreach \n in {1,...,5}{
\draw[->] (0,{3*\n/(6*\a}) -- (3*\n/6-0.15,{3*\n/(6*\a});
}
\node[scale=2] at ({(3 + 3/\a)/2},{(3 + 3/\a)/2}) {$F$};
\end{tikzpicture}};

 \node at (5.2,0) { 
\begin{tikzpicture}[scale=1.6]
\draw (0,0) -- (0,3) -- (3/\a,3) -- (3/\a,3/\a) -- (3,3/\a) -- (3,0) -- (0,0);
\draw (0,3) -- (3/\a,0) -- (3/\a,3);

\foreach \n in {1,...,5}{
\draw[->] ({3*\n/(6*\a},3) -- ({3*\n/(6*\a},3-3*\n/6+0.15);
}
\node[scale=2] at ({(3 + 3/\a)/2},{(3 + 3/\a)/2}) {$G$};
\end{tikzpicture}};
 \end{tikzpicture}
    \caption{A linear counter-twisting toral linked twist map $H=G \circ F$ on the region $R = P \cup Q$. Dashed lines denote periodic segments in $R \setminus Q$; case illustrated $\alpha  = 2.4$.}
    \label{fig:simpleLTM}
\end{figure}
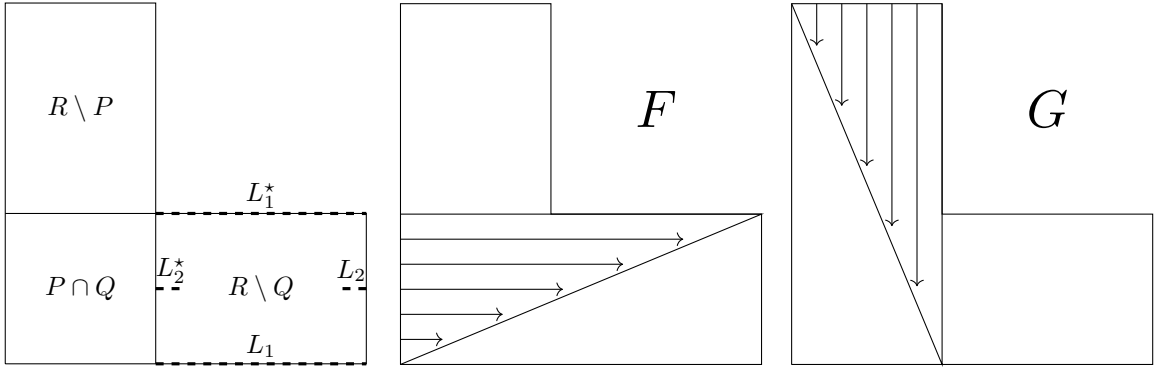

\begin{thm}
    \label{thm:Bernoulli}
    Let $3 > \alpha > \alpha_0 \approx 2.1319$. Over this parameter range $H$ has the Bernoulli property. 
\end{thm}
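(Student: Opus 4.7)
My strategy follows the classical pipeline for establishing the Bernoulli property for piecewise smooth non-uniformly hyperbolic systems: (i) establish non-zero Lyapunov exponents almost everywhere via a cone field argument, (ii) invoke Pesin theory for maps with singularities (Katok--Strelcyn) to obtain local stable/unstable manifolds and an ergodic decomposition, (iii) run a Hopf-style argument to show the decomposition is trivial and promote ergodicity to the K-property, and (iv) appeal to a Chernov--Haskell-type theorem to upgrade K to Bernoulli.

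Concretely, I would first write down $DH$ on each of the four regions $P\cap Q$, $P\setminus Q$, $Q\setminus P$, $R\setminus(P\cup Q)$. Only on $P\cap Q$ is $DH$ a product of two shears and hence hyperbolic (with trace $\alpha^2+2$); on the two ``peripheral'' strips $DH$ is a unipotent shear, and on the central square it is the identity. The classical approach of Wojtkowski and Przytycki seeks a constant invariant cone field that is strictly contracted by $DH$ on $P\cap Q$; in the weak-shear regime that strict contraction is spoiled by the shearing picked up in the peripheral strips between successive visits to $P\cap Q$. My plan is therefore to replace the constant cone field by one that depends on position (or is only eventually strictly invariant, after a controlled number of iterates), engineered so that the net effect of any itinerary through the four regions still contracts it. The threshold $\alpha_0 \approx 2.1319$ should emerge as the infimum of $\alpha$ for which such a construction survives the worst-case excursion.

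With non-zero Lyapunov exponents in hand, Katok--Strelcyn supplies Pesin-regular local stable and unstable manifolds off a set of measure zero. The Hopf argument then requires linking almost every pair of points by a chain of stable and unstable manifolds; the piecewise-linear structure of $H$, together with transversality of the horizontal unstable direction on $P$ and the vertical stable direction on $Q$, should yield openness of the local ergodicity set and rule out a nontrivial ergodic decomposition. Promotion of ergodicity to K is standard given the hyperbolic structure, and the step from K to Bernoulli follows the Chernov--Haskell framework, which is essentially automatic once the preceding steps go through in the piecewise-smooth setting.

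The main obstacle is unambiguously the first step. All of the novelty advertised in the abstract---a technique ``applicable to a broad range of non-uniformly hyperbolic examples''---must lie in the construction of a cone field that remains strictly invariant throughout every itinerary, however long the sojourn in the non-hyperbolic strips $P\setminus Q$ and $Q\setminus P$. Obtaining the sharp constant $\alpha_0$ will require optimising this cone construction against the worst possible combinations of excursion lengths, and I expect that the remaining machinery (Katok--Strelcyn, Hopf, Chernov--Haskell) will go through along familiar lines once this hyperbolicity estimate is in place.
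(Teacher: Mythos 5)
Your pipeline (cone field $\Rightarrow$ Katok--Strelcyn $\Rightarrow$ Hopf $\Rightarrow$ K $\Rightarrow$ Bernoulli) is the right ambient framework, but you have located the difficulty in the wrong place, and as a result your plan would not produce the threshold $\alpha_0$. Non-zero Lyapunov exponents, via a constant invariant cone field, are already known for \emph{all} $\alpha > 2$ from Wojtkowski's 1980 paper; the paper simply quotes this and uses Przytycki's cones $\mathcal{C}$ and $\mathcal{C}'$ unchanged. So step (i), which you identify as ``unambiguously the main obstacle'' and the source of $\alpha_0 \approx 2.1319$, is in fact free, and no position-dependent or delayed cone construction is needed. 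Optimising a cone field against worst-case itineraries through $P\setminus Q$ and $Q\setminus P$ would just recover the bound $\alpha > 2$, not $\alpha_0$.

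The genuine obstacle, and the source of the constant $\alpha_0$, is the step you describe as ``should yield openness of the local ergodicity set'' --- namely, showing that the exponentially small local manifolds of Pesin theory can be pushed forward/backward to macroscopic size before the singularity set of $H$ cuts them to pieces. The paper works with the induced return map $H_S$ on $S = P\cap Q$ and proves a \emph{growth lemma}: given a line segment $\Gamma \subset S$ aligned with the unstable cone, either one branch of $F_S(\Gamma)$ is strictly longer than $\Gamma$, or $\Gamma$ lies in one of a handful of exceptional configurations (straddling two consecutive singularity curves $\mathscr{L}_{k-1}, \mathscr{L}_k$, or mapping under a bounded power of $H$ into a full-width ``$v$-segment''). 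The constant $\alpha_0$ is exactly the parameter at which the complexity estimate $1/E_1 + 1/E_3 + 1/E_4 = 1$ becomes an equality, where $E_k$ is the minimal expansion over a return of time $k$; below $\alpha_0$ the three-piece cut defeats the one-step expansion. Handling segments that accumulate on the period-2 line $\mathscr{L}_2$ requires an additional inductive argument, together with a $\lambda$-lemma applied to a specific period-4 point $z_p$ whose stable manifold acts as a separatrix. None of this appears in your plan. The upshot is that a growth/complexity analysis of the induced map, not a refined cone construction, is what the parameter window $3 > \alpha > \alpha_0$ is measuring, and any attempt to prove the theorem must face that analysis head on.

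One smaller point: the scheme the paper invokes from Katok--Strelcyn bypasses the explicit ergodicity $\Rightarrow$ K $\Rightarrow$ Bernoulli ladder. It reduces Bernoulli directly to the manifold-intersection property (\textbf{MR}), i.e.\ to showing that sufficiently high forward images of unstable manifolds meet sufficiently high backward images of stable manifolds. The transformations $I_1, \dots, I_4$ (involutions exchanging $F$ and $G$, forward and backward time) are used to transfer the growth lemma from unstable to stable manifolds and from $F_S$ to $G_S$, which is what makes (\textbf{MR}) checkable once the growth lemma is in place.
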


For comparison with \cite{przytycki_ergodicity_1983,pathak_ergodicity_2023}, Theorem \ref{thm:Bernoulli} covers the range $-4.545 \approx -\alpha_0^2 > \alpha\beta > -9$. This brings us close to, yet still bounded away from, the `optimal' shear parameter of $\alpha=2$, below which $H$ is non-ergodic (see e.g. \cite{sturman_mathematical_2006}, Figure 6.12). We claim that $\alpha_0$ is essentially the lowest achievable bound on the mixing window when relying on a single iterate of the canonical induced map for expansion. We discuss this further in section \ref{sec:remarks}, outlining the likely necessary method for dealing with the remaining parameter space.

The fundamental obstacle in the counter-twisting setting is outlined in \cite{przytycki_ergodicity_1983}. While hyperbolicity provides the expansion needed to prove mixing properties, it is tempered by the folding effect of the \emph{singularities}, where the Jacobian of $H$ or its higher powers are undefined. Showing that hyperbolicity dominates this interaction (establishing so called complexity estimates) is a key step in proving statistical properties of various chaotic systems, e.g. billiards. \cite{springham_polynomial_2014,myers_hill_exponential_2022,myers_hill_loss_2023} give detailed examinations of these estimates for maps very similar to $H$, using them to prove results on mixing rates using the schemes of \cite{chernov_billiards_2005,chernov_statistical_2009}.

In slow mixing systems such as LTMs, laborious calculations are often necessary to verify the estimates directly. Particularly, as is the case with $H$ for small $\alpha$, when hyperbolicity is weak and one must consider higher powers of the map. An indirect approach is often more practical, relying on other features of the map to simplify the problem. Whereas \cite{przytycki_ergodicity_1983} relies on the specific structure of periodic points under $F$, here we employ basic complexity estimates and exploit the self-similar structure of the induced map. This approach, developed in \cite{myers_hill_loss_2023}, appears applicable to a broad range of non-uniformly hyperbolic examples, whose induced return maps typically admit this self-similar structure. For example in \cite{myers_hill_mixing_2022} it is applied to a non-monotonic LTM, possessing both co-twisting and counter-twisting dynamics. The present work demonstrates the method for a well known map, over a broad parameter range. It is organised as follows. In section \ref{sec:background} we recall some classical results, in particular the scheme of \cite{katok_invariant_1986} used to show the Bernoulli property. Sections \ref{sec:returnTimes}, \ref{sec:lemmas} deal with the structure of the induced map and the images of certain line segments interacting with it. Section \ref{sec:growth} proves the growth lemma, the key step in the of proof Theorem \ref{thm:Bernoulli}, given in section \ref{sec:proof}. We conclude with some remarks on extension to the full expected mixing parameter range $\alpha \geq 2$.

\section{Background results}

\label{sec:background}
By \cite{wojtkowski_linked_1980} $H$ is \emph{hyperbolic}, possessing non-zero Lyapunov exponents almost everywhere, for all $\alpha >2$. This, together with mild conditions on the singularity set (see \cite{sturman_mathematical_2006} for a detailed treatment), implies the existence of local unstable and stable manifolds $\gamma_u(z)$ and $\gamma_s(z)$ at almost every $z \in R$. This result is due to \cite{katok_invariant_1986}, a generalisation of Pesin theory \cite{pesin_characteristic_1977} for `smooth maps with singularities'. The Bernoulli property (and by extension the lower rungs of the ergodic hierarchy: strong mixing, ergodicity etc.) follows from establishing:
\begin{enumerate}[label=(\textbf{MR}):]
    \item For almost any $z,\zeta \in R$, there exist $M,N$ such that for all $m\geq M$ and $n\geq N$, $H^m\gamma_u(z) \cap H^{-n}\gamma_s(\zeta) \neq \varnothing$.
\end{enumerate}
As $H$ is piecewise-linear and non-uniformly hyperbolic, local manifolds are line segments whose diameters may be arbitrarily small. A key step in showing (\textbf{MR}) is establishing exponential expansion in the diameters of $H^m\gamma_u(z)$ and  $H^{-n}\gamma_s(\zeta)$, growing these images up to some tangible size where intersections may be inferred. 

A uniformly hyperbolic induced map forms the basis for this expansion. The canonical choice is the return map $H_S: S \to S$, $z \mapsto H^{r}(z)$, where $r = r(z;H,S) = \min \{  i > 0 \, | \, H^i(z) \in S \} $ denotes the return time of $z$ to $S= P \cap Q$ under $H$. It decomposes as the composition $H_S = G_S \circ F_S$ of returns under $F$ then $G$ so $H_S(z) = G^l \circ F^k(z)$ for some naturals $k,l \geq 1$ depending on $z$. The return time of $z$ to $S$ is then given by $r = k + l -1$. Recall from \cite{przytycki_ergodicity_1983} the cone $\mathcal{C}$, defined by vectors $(v_1,v_2) \in \mathbb{R}^2 \setminus \{ \mathbf{0} \}$ with $ L \leq v_1/v_2 \leq 0$ where $L = \frac{1}{2} \left( -\alpha + \sqrt{\alpha^2 -4} \right)$. Similarly defining $\mathcal{C}'$ by $v_1/v_2 \geq L + \alpha$, one has $DF^k \mathcal{C} \subset \mathcal{C}'$ and $DG^l \mathcal{C}' \subset \mathcal{C}$ for all $k,l \geq 1$. Hence $\mathcal{C}$ is invariant under the Jacobian $DH_S$; it further provides bounds on the gradients of local manifolds mapped into $S$ by $H$. In particular at almost every $z \in R$, we can find $i>0$ such that $H^i \gamma_u(z)$ contains a linear segment $\Gamma_0 \subset S$, aligned with some $v \in \mathcal{C}$ \cite{przytycki_ergodicity_1983}. The stable cone $\mathcal{C}^s$, defined by the inequality $ L \leq v_2/v_1 \leq 0$, similarly bounds the backwards images of stable manifolds $H^{-i}\gamma_s(z) \subset S$.

\section{Structure of return times}
\label{sec:returnTimes}
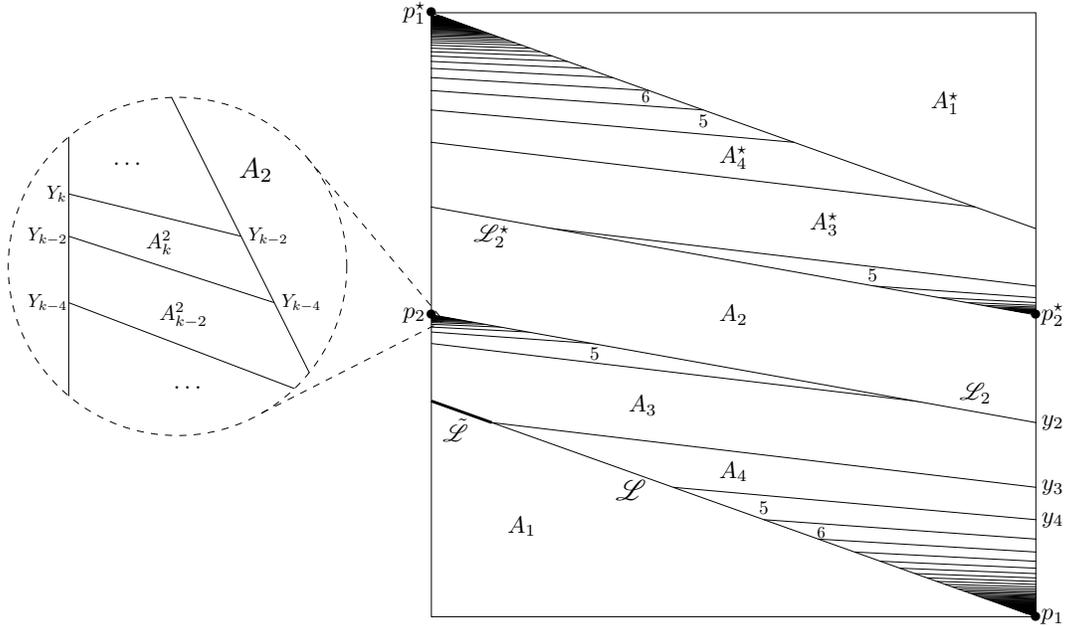
\begin{figure}
    \centering

\begin{tikzpicture}

\node[scale = 0.9] at (0,0) {

\begin{tikzpicture}[scale = 2.5]
    \tikzmath{\a = 2.8;}

    \draw (0,0) rectangle (10/\a,10/\a);

    \node at (10/\a,0) {$\bullet$};
    \node at (0,10/\a) {$\bullet$};
    \node at (0,5/\a) {$\bullet$};
    \node at (10/\a,5/\a) {$\bullet$};

    \node at (10/\a+0.1,0) {$p_1$};
    \node at (-0.1,10/\a) {$p_1^\star$};
    \node at (-0.1,5/\a) {$p_2$};
    \node at (10/\a+0.1,5/\a) {$p_2^\star$};

    \node at  (10/\a+0.1, { 10*(1/\a - 1/\a^2)/2 }) {$y_2$};
    \node at  (10/\a+0.1, { 10*(1/\a - 1/\a^2)/3 }) {$y_3$};
    \node at  (10/\a+0.1, { 10*(1/\a - 1/\a^2)/4 }) {$y_4$};

    \begin{scope}

    \clip (0,0) rectangle (10/\a,10/\a);

    \node at (5/\a,5/\a) {$A_2$};
    \node at (1.5/\a,1.5/\a) {$A_1$};
    \node at (8.5/\a,8.5/\a) {$A_1^\star$};
    \node at (3.5/\a,3.5/\a) {$A_3$};
    \node at (6.5/\a,6.5/\a) {$A_3^\star$};
    \node at (5/\a,2.4/\a) {$A_4$};
    \node[scale=0.8] at (2.7/\a,4.35/\a) {5};
    \node at (5/\a,7.6/\a) {$A_4^\star$};
    \node[scale=0.8] at (7.3/\a,5.65/\a) {5};
    \node[scale=0.8] at (5.5/\a,1.8/\a) {5};
    \node[scale=0.7] at (6.45/\a,1.4/\a) {6};

        \node[scale=0.8] at (4.5/\a,8.2/\a) {5};
    \node[scale=0.7] at (3.55/\a,8.6/\a) {6};

    \node[scale=1.2] at  (3.3/\a,2.1/\a) {$\mathscr{L}$};

    \node[scale=1] at  (9/\a,3.7/\a) {$\mathscr{L}_2$};
    \node[scale=1] at  (1/\a,6.3/\a) {$\mathscr{L}_2^\star$};

    \draw[very thick] (0,{10/\a^2}) -- ({10*( 3/\a - 1)/2},{ 10*(1/\a - 1/\a^2)/2  });
    \node at (0.4/\a, {10/\a^2 - 0.17}) {$\tilde{\mathscr{L}}$};
    
    \draw (0, {10/(\a*\a)  }) -- (10/\a,0);

    \foreach \k in {2,...,120}{
    \draw ( {10*(\k -\a)/( \a*\k - \a  ) }  ,{ 10*(\a -1)/( \a*\a*\k - \a*\a ) } ) -- (10,0);

    }

    \begin{scope}
    \clip (10,0) -- ( {10*(2 -\a)/( \a*2 - \a  ) }  ,{ 10*(\a -1)/( \a*\a*2 - \a*\a ) } ) -- (0,0) -- (10,0);
    \foreach \n in {1,...,20}{
    \draw ( 0 , {10*(\n + 1/\a)/( (2*\n+1 )*\a )}) -- ( 10/\a , {10*\n/((2*\n+1)*\a ) } );
    }
    
    \end{scope}

    \node at (0,{(5*\a + 5)/(\a*\a)}) {

    \begin{tikzpicture}[scale = 2.5,rotate=180]
        \draw (0, {10/(\a*\a)  }) -- (10/\a,0);

    \foreach \k in {2,...,120}{
    \draw ( {10*(\k -\a)/( \a*\k - \a  ) }  ,{ 10*(\a -1)/( \a*\a*\k - \a*\a ) } ) -- (10,0);

    }

    \begin{scope}
    \clip (10,0) -- ( {10*(2 -\a)/( \a*2 - \a  ) }  ,{ 10*(\a -1)/( \a*\a*2 - \a*\a ) } ) -- (0,0) -- (10,0);
    \foreach \n in {1,...,20}{
    \draw ( 0 , {10*(\n + 1/\a)/( (2*\n+1 )*\a )}) -- ( 10/\a , {10*\n/((2*\n+1)*\a ) } );
    }
    
    \end{scope}

    \end{tikzpicture}
    
    };

  \end{scope}

    \draw[dashed] (-4.2/\a,5.8/\a) circle (2.8/\a);
    \draw[dashed] ({-4.2/\a + 2.8*cos(300)/\a },{5.8/\a + 2.8*sin(300)/\a }) -- (0.2/\a,4.9/\a);
    \draw[dashed] ({-4.2/\a + 2.8*cos(40)/\a },{5.8/\a + 2.8*sin(40)/\a }) -- (0.2/\a,4.9/\a);
    \clip (-4.2/\a,5.8/\a) circle (2.8/\a);
    \draw (-6/\a,10/\a) -- (-6/\a,0);
    \draw (-6/\a,12/\a) -- (0,0);

    \draw (-6/\a,7/\a) -- (-3.15/\a,6.3/\a);
    \draw (-6/\a,6.3/\a) -- (-2.6/\a,5.2/\a);
    \draw (-6/\a,5.2/\a) -- (-1.8/\a,3.6/\a);

 \node[scale=0.8] at (-6.2/\a,7/\a) {$Y_k$};
    \node[scale=0.8] at (-6.35/\a,6.3/\a) {$Y_{k-2}$};
    \node[scale=0.8] at (-6.35/\a,5.2/\a) {$Y_{k-4}$};

    \node[scale=0.8] at (-2.7/\a,6.3/\a) {$Y_{k-2}$};
    \node[scale=0.8] at (-2.15/\a,5.2/\a) {$Y_{k-4}$};

    \node at (-4/\a,3.8/\a) {$\dots$};
    \node at (-5/\a,7.5/\a) {$\dots$};
    \node[scale=0.9] at (-4.5/\a, 6.2/\a) {$A_k^2$};
    \node[scale=0.9] at (-4.1/\a, 5/\a) {$A_{k-2}^2$};
    \node[scale=1.2] at (-2.9/\a, 7.4/\a) {$A_2$};
\end{tikzpicture}

    };
\end{tikzpicture}
    
    \caption{Partition of $S$ into sets $A_k^{(\dots)}$ of return time $k$ under $F$. Each are bounded by preimages of $\partial S$; for example near $p_1$ each $A_k$ is bounded between $\mathscr{L}_{k-1}$ and $\mathscr{L}_k$, meeting $\partial S$ at $(1/\alpha, y_{k-1})$ and $(1/\alpha, y_k)$ respectively. Near $p_2$ the accumulating sets have odd return times, bounded between $\mathscr{L}_{k-2}^2$ and $\mathscr{L}_k^2$ which meet $\partial S$ at $(0,Y_{k-2})$, $(0,Y_k)$ respectively. Case illustrated $\alpha = 2.8$.}
    \label{fig:Fsing}
\end{figure}

We begin by describing the structure of hitting times $h(z;F,S) = \min \{  i > 0 \, | \, F^i(z) \in S \}$ over $z \in P$. For $2 < \alpha < 3$ such integers exist outside of the four line segments in $P \setminus S$:
\begin{itemize}
    \item $L_1$: $y=0$, $\frac{1}{\alpha}< x < 1$,
    \item $L_1^\star$: $y=1$, $\frac{1}{\alpha}< x < 1$,
    \item $L_2$: $y = \frac{1}{2\alpha}$, $\frac{1}{2} + \frac{1}{\alpha} < x <1$,
    \item $L_2^\star$: $y = \frac{1}{2\alpha}$, $\frac{1}{\alpha} < x < \frac{1}{2}$,
\end{itemize}
each periodic under $F$ with period given by the subscript. These segments are sketched as the dashed lines in Figure \ref{fig:simpleLTM}. Across the rest of the parent circles $y \in \{0, \frac{1}{2\alpha}, \frac{1}{\alpha} \}$, points hit $S$ in just one or two iterates. The structure of return times to $S$ is plotted in Figure \ref{fig:Fsing}; the subscripts of the labelled regions correspond to the return time $r(\cdot;F,S)$. Near the circles $y \in \{0, \frac{1}{2\alpha}, \frac{1}{\alpha} \}$ we see either very fast returns, $r \in \{ 1,2\}$, or very slow returns, with $r$ diverging as we approach the accumulation points $p_i^{(\star)}$. In particular each $p_i^{(\star)}$ lies in the closure of $L_i^{(\star)}$ on $\partial S$; we label the four segments $\partial S_j$ which make up this boundary as shown in Figure \ref{fig:segments}. For example near $p_1$ we have the large immediately returning set $A_1 \subset F^{-1}(S) \cap S$ of points which shift no further than $\partial S_2$ under $F$, and a self similar family of sets $A_k$. For $k \geq 4$, the orbit of a point $(x,y) \in A_k$ maps into $P \setminus S$ under $F$ then shifts horizontally by $k-1$ further increments of $\alpha y$, hitting $S$ just beyond the left boundary $\partial S_1$. Each $A_k$ is then bounded by the segment $\mathscr{L} \subset F^{-1}(\partial S_2)$, $\partial S_2$, $\mathscr{L}_{k-1}$ and $\mathscr{L}_k$, where $\mathscr{L}_k \subset F^{-k}(\partial S_1)$. For future reference, $\mathscr{L}$ lies on the line
\begin{equation}
    \label{eq:L}
    y = -\frac{1}{\alpha} \left( x-\frac{1}{\alpha} \right)
\end{equation}
and the $\mathscr{L}_k$ lie on the lines
\begin{equation}
\label{eq:Lk}
    y =  -\frac{1}{k\alpha} \left(x -1 \right).
\end{equation}

Near $p_2$, points $(x,y)$ either return under $F^2$ (the set $A_2$, bounded by the segment $\mathscr{L}_2 \subset F^{-2}(\partial S_1) $) or $F(x,y)$ falls just short of the line $x=1/2$. Letting $(x_n,y) = F^{2n+1}(x,y)$, the sequence $x_n = x_{n-1} + 2\alpha y \text{ mod }1 $ is strictly decreasing, giving $N$ such that $x_N < 1/\alpha$, so that $F^{2N+1}(x,y)$ lies left of $\partial S_2$. This gives odd first return times to $S$ as the even iterates up to $n=2N$ lie further right near the segment $L_2$. The number $N$ diverges as we approach $p_2$ ($y \to \frac{1}{2\alpha}$) giving, for $k \geq 5$, secondary accumulating sets of constant return time $A_k^2$ bounded by $\mathscr{L}_2$, $\partial S_1$, $\mathscr{L}_{k-2}^2$ and $\mathscr{L}_k^2$ where $\mathscr{L}_k^2 \subset F^{-k}(\partial S_2)$. For future reference, the $\mathscr{L}_k^2$ lie on the lines
\begin{equation}
    \label{eq:Lk2}
    y = \frac{1}{k\alpha} \left( \frac{k-1}{2} + \frac{1}{\alpha} - x \right).
\end{equation}

The remaining region between these two accumulating patterns forms the set $A_3$ of constant return time 3, completing the description of return times to $S$ over $y < \frac{1}{2\alpha}$. Noting that $F$ commutes with the involution $I_1(x,y) = \left( \frac{1}{\alpha} - x ,\frac{1}{\alpha} - y \right) \text{ mod 1}$, we may infer sets of constant return time above $y = \frac{1}{2\alpha}$ by mapping under $I_1$. Figure \ref{fig:Fsing} provides a plot at an example parameter, denoting the images under $I_1$ with a superscript $\star$.

Defining three further transformations
$I_2(x,y) = \left( x, \frac{1}{\alpha} - y \right) \text{ mod 1}$, $I_3(x,y) = \left( y , \frac{1}{\alpha} - x \right) \text{ mod 1}$, $I_4(x,y) = \left( y, x \right) $, and the map $\mathcal{H} = F \circ G$, the following relations are straightforward to verify:
\begin{fact}
    \begin{enumerate}[label=(\alph*)]
        \item $I_1$ commutes with $F$, $G$ and by extension $H$.
        \item $I_2 \circ G = G^{-1} \circ I_2$.
        \item $I_3 \circ F = G \circ I_3$.
        \item $I_3 \circ H = \mathcal{H} \circ I_3$.
        \item $I_4 \circ H = H^{-1} \circ I_4$.
    \end{enumerate}
    The same relations hold for all powers of the maps $F,G,H$, and by extension the return maps $F_S,G_S,H_S$.
\end{fact}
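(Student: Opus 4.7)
The proof is a direct case-by-case verification from the piecewise-linear definitions of $F$ and $G$. The plan is to evaluate each identity on points in the shear regions $P$ or $Q$ and check that the resulting affine expressions coincide modulo 1; on the complement of $P \cup Q$ both sides reduce to the relevant involution (since $F$, $G$ act as the identity off $P$, $Q$ respectively), so commutation there is automatic. The needed domain observations are that $I_1$ is rotation by $\pi$ about the center $(1/(2\alpha), 1/(2\alpha))$ of $S$ and preserves both $P$ and $Q$; $I_2$ preserves both as well (interpreting the fiber flip $y \mapsto 1/\alpha - y$ modulo $1$ on $Q$); whereas $I_3$ and $I_4$ each interchange $P$ and $Q$.

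With these in hand, identities (a), (b), (c), and the companion $I_3 \circ G = F \circ I_3$ each reduce to a short arithmetic check. For example, to verify (c) on $(x,y) \in P$, one computes $I_3 \circ F(x,y) = I_3(x+\alpha y, y) = (y, 1/\alpha - x - \alpha y)$ modulo $1$, while $I_3(x,y) = (y, 1/\alpha - x) \in Q$ and hence $G \circ I_3(x,y) = (y, 1/\alpha - x - \alpha y)$ modulo $1$. Identity (d) then drops out by chaining (c) with its $G$-companion: $I_3 \circ H = I_3 \circ G \circ F = F \circ I_3 \circ F = F \circ G \circ I_3 = \mathcal{H} \circ I_3$. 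Identity (e) factors similarly through the pair $I_4 \circ F = G^{-1} \circ I_4$ on $P$ and $I_4 \circ G = F^{-1} \circ I_4$ on $Q$, from which $I_4 \circ H = F^{-1} \circ G^{-1} \circ I_4 = H^{-1} \circ I_4$.

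Extension to powers is by straightforward induction on the exponent, using the same chaining argument. For the induced maps, I would use that $F_S$, $G_S$, $H_S$ are compositions of iterates indexed by the return-time function on $S$; since each of $I_1$ through $I_4$ preserves $S$ setwise, conjugation by any of them permutes the level sets of the return-time function in a manner compatible with the lifted identities, so the relations descend to the return maps without modification. The one point requiring care is the domain bookkeeping across the piecewise definition and the modular arithmetic of the $I_j$; but the clean symmetry of $I_1$ through $I_4$ with respect to $P$ and $Q$ renders this verification routine rather than obstructive, and I do not expect any serious difficulty.
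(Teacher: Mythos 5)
Your verification is correct and matches what the paper leaves implicit: the relations are stated as a Fact with the remark that they are ``straightforward to verify,'' and your direct case-by-case check on the three pieces $P\setminus Q$, $Q\setminus P$, $P\cap Q$, using the domain observations about how each $I_j$ transports $P$, $Q$, and $S$, is exactly the intended computation. Your reduction of (d) and (e) to the companion identities $I_3\circ G = F\circ I_3$ and $I_4\circ F = G^{-1}\circ I_4$, $I_4\circ G = F^{-1}\circ I_4$ is clean, and your observation that each $I_j$ preserves $S$ setwise is the essential ingredient for passing the identities to the first-return maps; no gap.
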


The transformation $I_3$ allows us to deduce the structure of $G_S$ from that of $F_S$, sketched in Figure \ref{fig:Gsing}. The labelling scheme follows directly from Figure \ref{fig:Fsing} with e.g. the set $B_4$ having return time 4 under $G$, bounded by the segments $\mathscr{I} = I_3(\mathscr{L})$, $\mathscr{I}_3 = I_3(\mathscr{L}_3)$, $\mathscr{I}_4 = I_3(\mathscr{L}_4)$, $\partial S_3 = I_3(\partial S_2)$. For future reference $\mathscr{I}$ lies on the line $y= \alpha x$ and the $\mathscr{I}_k$ lie on the lines
\begin{equation}
    \label{eq:Ik}
    y = k \alpha x - 1 +\frac{1}{\alpha}.
\end{equation}

\begin{figure}
    \centering

\subfigure[][]{%
\label{fig:Gsing}%
   \begin{tikzpicture}
    \node[rotate=-90] at (0,0) {

\begin{tikzpicture}[scale = 2]
    \tikzmath{\a = 2.8;}

    \draw (0,0) rectangle (10/\a,10/\a);

    \node[rotate=90] at (10/\a,0) {$\bullet$};
    \node[rotate=90] at (0,10/\a) {$\bullet$};
    \node[rotate=90] at (0,5/\a) {$\bullet$};
    \node[rotate=90] at (10/\a,5/\a) {$\bullet$};

    \node[rotate=90] at (10/\a+0.15,0) {$q_1$};
    \node[rotate=90] at (-0.15,10/\a) {$q_1^\star$};
    \node[rotate=90] at (-0.15,5/\a) {$q_2$};
    \node[rotate=90] at (10/\a+0.15,5/\a) {$q_2^\star$};

    \begin{scope}

    \clip (0,0) rectangle (10/\a,10/\a);

    \node[rotate=90] at (5/\a,5/\a) {$B_2$};
    \node[rotate=90] at (1.5/\a,1.5/\a) {$B_1$};
    \node[rotate=90] at (8.5/\a,8.5/\a) {$B_1^\star$};
    \node[rotate=90] at (3.5/\a,3.5/\a) {$B_3$};
    \node[rotate=90] at (6.5/\a,6.5/\a) {$B_3^\star$};
    \node[rotate=90] at (5/\a,2.4/\a) {$B_4$};
    \node[rotate=90] at (5/\a,7.6/\a) {$B_4^\star$};

    \node[rotate=90][scale=1.2] at  (3.3/\a,2.1/\a) {$\mathscr{I}$};

    \node[rotate=90] at  (9/\a,3.7/\a) {$\mathscr{I}_2$};
    \node[rotate=90] at  (1/\a,6.3/\a) {$\mathscr{I}_2^\star$};
    
    \draw (0, {10/(\a*\a)  }) -- (10/\a,0);

    \foreach \k in {2,...,120}{
    \draw ( {10*(\k -\a)/( \a*\k - \a  ) }  ,{ 10*(\a -1)/( \a*\a*\k - \a*\a ) } ) -- (10,0);

    }

    \begin{scope}
    \clip (10,0) -- ( {10*(2 -\a)/( \a*2 - \a  ) }  ,{ 10*(\a -1)/( \a*\a*2 - \a*\a ) } ) -- (0,0) -- (10,0);
    \foreach \n in {1,...,20}{
    \draw ( 0 , {10*(\n + 1/\a)/( (2*\n+1 )*\a )}) -- ( 10/\a , {10*\n/((2*\n+1)*\a ) } );
    }
    
    \end{scope}

    \node at (0,{(5*\a + 5)/(\a*\a)}) {

    \begin{tikzpicture}[scale = 2,rotate=180]
        \draw (0, {10/(\a*\a)  }) -- (10/\a,0);

    \foreach \k in {2,...,120}{
    \draw ( {10*(\k -\a)/( \a*\k - \a  ) }  ,{ 10*(\a -1)/( \a*\a*\k - \a*\a ) } ) -- (10,0);

    }

    \begin{scope}
    \clip (10,0) -- ( {10*(2 -\a)/( \a*2 - \a  ) }  ,{ 10*(\a -1)/( \a*\a*2 - \a*\a ) } ) -- (0,0) -- (10,0);
    \foreach \n in {1,...,20}{
    \draw ( 0 , {10*(\n + 1/\a)/( (2*\n+1 )*\a )}) -- ( 10/\a , {10*\n/((2*\n+1)*\a ) } );
    }
    
    \end{scope}

    \end{tikzpicture}
    
    };

  \end{scope}
\end{tikzpicture}

    };
\end{tikzpicture}
    }%
    \subfigure[][]{%
\label{fig:segments}%
    \begin{tikzpicture}[scale = 2]
    \tikzmath{\a = 2.8;}
    
    \filldraw[fill=gray!50] (5/\a, 0) -- ({ 10/\a - 10*(1/\a + 1/\a^2)/2 },10/\a) -- (5/\a, 10/\a) -- ({ 10*(1/\a + 1/\a^2)/2 },0) -- (5/\a, 0);
    
    \draw[very thick] (0,0) rectangle (10/\a,10/\a);
    \node[white] at (5/\a, -0.21) {$q_2^\star$};
    \node[scale=1.2] at (0.2,6.5/\a) {$\partial S_1$};
    \node[scale=1.2] at (10/\a - 0.2,6.5/\a) {$\partial S_2$};
    \node[scale=1.2] at (5/\a,-0.15) {$\partial S_3$};
    \node[scale=1.2] at (5/\a, 10/\a + 0.15) {$\partial S_4$};

    \draw[very thick] ( 6.4/\a, 0 ) -- (4/\a,10/\a);

    \draw (5/\a, 10/\a) -- ({ 10*(1/\a + 1/\a^2)/2 },0);
    \draw (5/\a, 0) -- ({ 10/\a - 10*(1/\a + 1/\a^2)/2 },10/\a);

    \node[scale=0.9] at (2.8/\a,7.5/\a) {$I_2(\mathscr{I}_2^\star)$};
    \node[scale=0.9] at (6.2/\a,7.5/\a) {$I_2(\mathscr{I}_2)$};

    \node[scale=1.2] at (5.6/\a,1/\a) {$\Gamma''$};

    \draw  ({10*( 3/\a - 1)/2},{ 10*(1/\a - 1/\a^2)/2  }) -- (10/\a, {10*(1/\a - 1/\a^2)/3});
    \draw  (0,5/\a) -- (10/\a, {10*(1/\a - 1/\a^2)/2});

    \node at ({10*( 3/\a - 1)/2},{ 10*(1/\a - 1/\a^2)/2  }) {$\bullet$};
    \node[scale=0.9] at ({10*( 3/\a - 1)/2},{ 10*(1/\a - 1/\a^2)/2 - 0.15  }) {$\mathscr{L}_3 \cap \mathscr{L}$};

    \node at (9/\a,3.7/\a) {$\mathscr{L}_2$};
    \node at (9/\a,1.85/\a) {$\mathscr{L}_3$};

    \end{tikzpicture}
    }%
    \caption{Part (a) shows the distribution of return times to $S$ under $G$, using a similar labelling scheme to Figure \ref{fig:Fsing}. Part (b) gives a labelling of $\partial S$ and sketches a $v$-segment $\Gamma'' \subset G^2(B_2)$ (shaded).}
    \label{fig:other}
\end{figure}
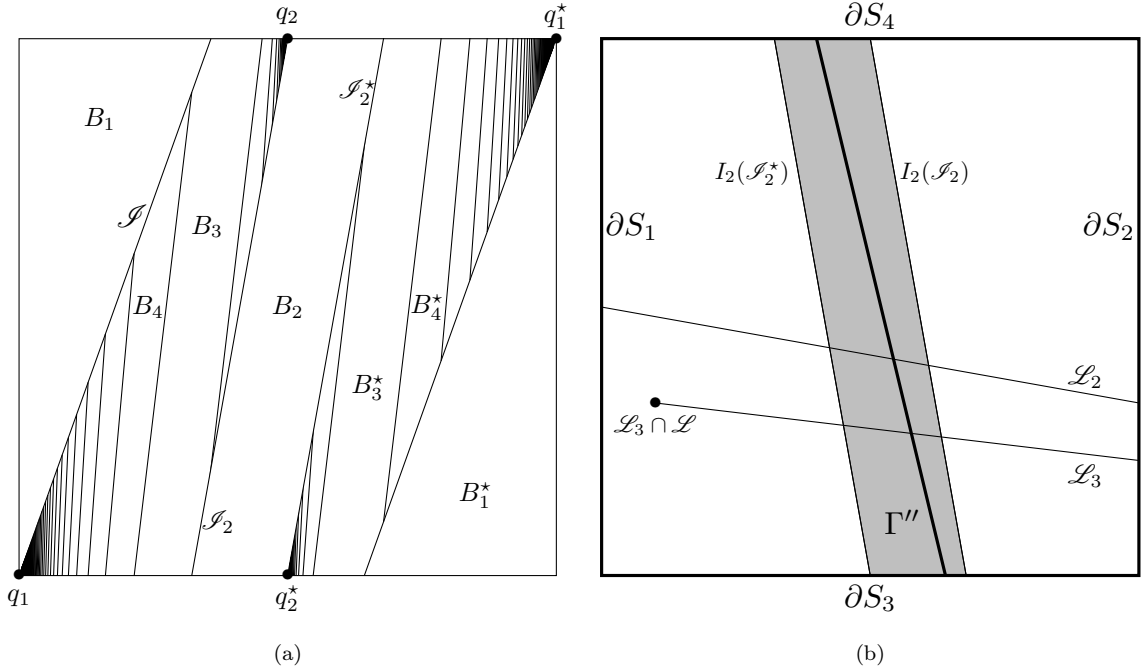

\section{Mapping into $v$-segments}
\label{sec:lemmas}
Given a line segment $\Gamma \subset S$, we say that $\Gamma$ is a $h$-segment if it connects $\partial S_1$ to $\partial S_2$. Similarly we call $\Gamma$ a $v$-segment if it connects $\partial S_3$ to $\partial S_4$ (see Figure \ref{fig:segments} for an example). We begin by showing how particular line segments map into $v$-segments.

\begin{lemma}
    \label{lemma:Lkmapping}
    Let $\alpha \geq \alpha_1 \approx 2.125$, the largest root of the cubic equation $2\alpha^3 -4 \alpha^2 - \alpha +1 =0$. Given a line segment $\Gamma \subset S$ aligned with some $v \in \mathcal{C}$:
    \begin{enumerate}
        \item If $\Gamma$ connects $\mathscr{L}_{k-1}$ to $\mathscr{L}_k$ for some $k \geq 4$, $F_S(\Gamma)$ contains a segment connecting $\mathscr{I}_{k-2}$ to $\mathscr{I}_{k-1}$.
        \item If $\Gamma$ connects $\mathscr{L}_2$ to $\mathscr{L}_3$, $H^4(\Gamma)$ contains a $v$-segment.
    \end{enumerate}
\end{lemma}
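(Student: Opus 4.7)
Both parts rest on the observation that a $\mathcal{C}$-aligned segment $\Gamma$ connecting $\mathscr{L}_{k-1}$ and $\mathscr{L}_k$ lies entirely in the constant-return-time region $A_k$. Granting this, $F_S$ acts on $\Gamma$ as the single affine shear $F^k\colon (x,y)\mapsto (x+k\alpha y \bmod 1,\, y)$, and the analysis reduces to explicit computations against the equations~(\ref{eq:L})--(\ref{eq:Ik}).

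\textbf{Part 1.} I would first verify $\Gamma \subset A_k$ by contrasting the steep slope of any $v \in \mathcal{C}$ (at most $1/L$, or vertical) against the shallow slopes $-1/((k-1)\alpha)$, $-1/(k\alpha)$ of $\mathscr{L}_{k-1}, \mathscr{L}_k$ and the slope $-1/\alpha$ of $\mathscr{L}$; a direct comparison using~(\ref{eq:L}), (\ref{eq:Lk}) shows $\Gamma$ cannot escape $A_k$ via $\mathscr{L}$ or $\partial S_2$. Then $F^k$ sends the endpoint on $\mathscr{L}_k$ to $\partial S_1$ (where $x=0$) and the endpoint on $\mathscr{L}_{k-1}$ to a point on the line $y=x/\alpha$, so $F^k(\Gamma)$ is a single affine segment with one endpoint on the left boundary of $S$. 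The crux is to check this image crosses both $\mathscr{I}_{k-1}$ and $\mathscr{I}_{k-2}$, using~(\ref{eq:Ik}) to pin down the intersection coordinates in closed form. I expect the extremal case to occur at small $k$ (likely $k=4$) with an endpoint close to $\mathscr{L}\cap\mathscr{L}_{k-1}$; the resulting algebraic constraint should simplify precisely to $2\alpha^3 - 4\alpha^2 - \alpha + 1 \geq 0$, i.e., $\alpha \geq \alpha_1$.

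\textbf{Part 2.} Since $\Gamma\subset A_3$ and $F(\Gamma), F^2(\Gamma)\subset P\setminus S$ sit outside $Q$, the map $G$ is trivial on them, giving $H^i(\Gamma)=F^i(\Gamma)$ for $i\leq 2$ and $H^3(\Gamma) = G\circ F^3(\Gamma)$. The endpoints of $F^3(\Gamma)$ are computable exactly as in part 1 (with the bounding lines now $\mathscr{L}_2$ and $\mathscr{L}_3$). I would then identify which $B_j$-region of Figure~\ref{fig:Gsing} contains each portion of $G\circ F^3(\Gamma)$, and apply one further $H = G\circ F$, exploiting the involution $I_3$ to re-use the $F$-side analysis on the $G$-side where possible, in order to show that a subsegment of the final image crosses both $\partial S_3$ and $\partial S_4$.

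\textbf{Main obstacle.} The chief technical burden is the geometric bookkeeping: uniformly verifying the crossing inequalities in part 1 across all $k\geq 4$ and all valid endpoints of $\Gamma$, and tracking the mod-1 wraparounds through the four shears of $H^4$ in part 2. The threshold $\alpha_1$ should emerge as the tightest such inequality; I expect it is dictated by the small-$k$ case of part 1, with part 2 following from an analogous but lower-iterate computation.
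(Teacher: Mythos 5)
Your overall strategy for Part 1 — restrict to a constant return-time region, push the endpoints through the affine shear $F^k$, and read off the required crossings from the explicit line equations — is the same as the paper's, and the observations that $F^k$ sends the $\mathscr{L}_k$-endpoint to $\partial S_1$ and the $\mathscr{L}_{k-1}$-endpoint to a point on the line $y = x/\alpha$ (i.e.\ on $F(\partial S_1)$) are exactly right. One small economy: the paper does not try to prove $\Gamma \subset A_k$; it simply restricts to $\Gamma \cap A_k$, which is all that is needed since $F_S(\Gamma) \supset F^k(\Gamma \cap A_k)$. More importantly, your prediction that the cubic threshold $2\alpha^3 - 4\alpha^2 - \alpha + 1 \geq 0$ falls out of the $k = 4$ case of Part 1 is incorrect. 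Working through Part 1 as you describe, the binding inequality is that the image endpoint $(\alpha y, y)$ with $y \geq y_{k-1}$ lies to the right of $\mathscr{I}_{k-2}$, which by (\ref{eq:Lk}) and (\ref{eq:Ik}) reduces to $(k-2)\alpha^2 - (k-1)\alpha - 1 \geq 0$; at $k=4$ this gives $\alpha \geq (3+\sqrt{17})/4 \approx 1.78$, strictly weaker than $\alpha_1$. The cubic in fact arises from Part 2: there one needs the image endpoint $(\alpha y, y)$ of $F^3(\Gamma \cap A_3)$, with $y \geq y_2$, to clear $\mathscr{I}_2^\star$ (the line $y = 2\alpha x - 1$), which is precisely $y_2 \leq 2\alpha^2 y_2 - 1$, i.e. $2\alpha^3 - 4\alpha^2 - \alpha + 1 \geq 0$.

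Your sketch of Part 2 also has a structural misstep. You propose to ``identify which $B_j$-region contains each portion of $G \circ F^3(\Gamma)$'' and then apply one more iterate of $H$ — but $G\circ F^3(\Gamma)$ has already left $S$, and the $B_j$ partition lives in $S$. The correct place to invoke the $B_j$ decomposition is \emph{before} applying $G$: one checks that $F^3(\Gamma \cap A_3)$ contains a sub-segment $\Gamma' \subset B_2$ running from $\mathscr{I}_2$ to $\mathscr{I}_2^\star$. Then $G_S(\Gamma') = G^2(\Gamma')$ is automatically a $v$-segment (these are the two bounding preimages of $\partial S_3$, $\partial S_4$), and the iterate count works out to $H^4 = G^2 F^3$ on $\Gamma \cap A_3 \cap F^{-3}(B_2)$, since $G$ acts trivially while the orbit is in $P \setminus S$ and $F$ acts trivially while it is in $Q \setminus S$. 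No appeal to $I_3$ is needed here.
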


\begin{proof}
    Starting with the first statement, restrict $\Gamma$ to $A_k$ and denote its endpoints by $z_k \in \mathscr{L}_k$ and $z_{k-1} \in \mathscr{L}_{k-1}$. We note that the $y$-coordinate of $z_{k-1}$ is bounded below by that of $\mathscr{L}_{k-1} \cap \partial S_2$, which by (\ref{eq:Lk}) is
    \begin{equation}
        \label{eq:ybound}
        y \geq y_{k-1} := \frac{1}{k-1} \left( \frac{1}{\alpha} - \frac{1}{\alpha^2} \right),
    \end{equation}
    and above by that of $\mathscr{L}_{k-1} \cap \mathscr{L}$, equal to $y_{k-2}$.
    With $\Gamma \subset A_k$ we have that $F_S(\Gamma) = F^k(\Gamma)$ and by definition $F^k(\mathscr{L}_k) \subset \partial S_1$ and $F^k(\mathscr{L}_{k-1}) \subset F(\partial S_1)$. The segment $F_S(\Gamma)$ thus connects $F^k(z_k) \in \partial S_1$ to $(\alpha y, y) : = F^k(z_{k-1}) \in F(\partial S_1)$. Given that $F_S(\Gamma)$ has non-negative gradient (aligned with $DF^kv \in \mathcal{C}'$), for $F_S(\Gamma)$ to intersect $\mathscr{I}_{k-2}$ and $\mathscr{I}_{k-1}$ it is sufficient to show that $(\alpha y, y)$ lies right of the line segment $\mathscr{I}_{k-2}$, with $y$ bounded above by the $y$-coordinate of $\mathscr{I}_{k-1} \cap \mathscr{I}$. Using the inequality $y_{k-1} \leq y \leq y_{k-2}$ and (\ref{eq:Ik}), this holds over the given parameter range for all $k \geq 4$ as required.

    The second statement follows by a similar argument. If $(\alpha y, y)$ lies right of the segment $\mathscr{I}_2^\star$ then $F^3(\Gamma \cap A_3) \subset F_S(\Gamma)$ contains a segment $\Gamma' \subset B_2$, connecting $\mathscr{I}_2$ to $\mathscr{I}_2^\star$. The image of such a segment under $G_S = G^2$ is a $v$-segment, so that $H^4(\Gamma)$ contains a $v$-segment. The segment $\mathscr{I}_{2}^\star$ lies on the line $y = 2\alpha x - 1$; since $y \geq y_2$ it is sufficient to verify:
    \begin{equation}
    \label{eq:2inequality}
        y_2 \leq  2 \alpha^2 y_2 - 1
    \end{equation}
    which reduces to
    \[ 2\alpha^3 - 4 \alpha^2 - \alpha + 1 \geq 0,\]
    valid for all $\alpha \geq \alpha_1$ as required.
\end{proof}

Consider the point $z_p = (x_p,y_p) \in A_3$, where
\begin{equation}
    \label{eq:P4point}
    (x_p,y_p) = \left(  \frac{2\alpha-4}{3\alpha^3 -8\alpha} ,\frac{\alpha^2 + \alpha -4}{3\alpha^3 -8\alpha} \right).
\end{equation}
It is periodic, of period 4 under $H$ with $F^3(z_p) \in B_1^\star$, $GF^3(z_p) \in A_1^\star$, $FGF^3(z_p) \in B_1$, giving
\[ DH_{z_p}^4 = DG \, DF \, DG \, DF^3 = \begin{pmatrix} -\alpha^2 + 1 &  3\alpha^3 + 4\alpha \\ \alpha^3-2 \alpha & 3\alpha^4 - 7\alpha^2 + 1\end{pmatrix}.  \]
For $\alpha > \sqrt{8/5} \approx 1.633$ this matrix is hyperbolic, possessing expanding and contracting eigenvectors $(1,g_+)^T$ and $(1,g_-)^T$ where
\[ g_{\pm} = \frac{4-2\alpha^{2}}{3\alpha^{3}-6\alpha \mp \sqrt{9\alpha^{6}-48\alpha^{4}+76\alpha^{2}-32}}.\] 
The region $M = F^{-3}( G^{-1} (  F^{-1} (  B_1 ) \cap A_1^\star ) \cap B_1^\star ) \cap A_3$ of points $z$ around $z_p$ similarly satisfying $DH_z^4 = DH_{z_p}^4$ is shaded in Figure \ref{fig:inclination}, bounded by $\partial A_3$ and the preimages $\mathscr{M}_1 \subset (FGF^3)^{-1}(\partial S_1)$, $\mathscr{M}_2 = (FGF^3)^{-1}(\mathscr{I})$. The line segment passing through $z_p$ with gradient $g_-$ and endpoints on $\partial M$ forms the stable manifold $\gamma_s$ at $z_p$. Defining the \emph{relative interior} of a line segment $\Gamma$ with endpoints $z_1,z_2$ as $\Gamma^\circ = \Gamma \setminus \{ z_1, z_2 \}$, we have the following:

\begin{lemma}
    \label{lemma:periodicPoint}
    Let $\alpha > \alpha_2 \approx 2.127$. Let $\Gamma$ be a line segment, aligned with $v \in \mathcal{C}$, which intersects $\gamma_s$ at some point $z_0 \in \Gamma^\circ$. Then there exists $k$ such that $H^k(\Gamma)$ contains a $v$-segment.
\end{lemma}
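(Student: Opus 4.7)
The plan is to exploit the hyperbolic linear action of $H^4$ on $M$ around $z_p$, iterating to stretch $\Gamma$ until an image contains a subsegment connecting $\mathscr{L}_2$ to $\mathscr{L}_3$, at which point Lemma \ref{lemma:Lkmapping}(2) yields the desired $v$-segment.

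First, $\Gamma \cap \gamma_s = \{z_0\}$ is transversal since $v \in \mathcal{C}$ while $\gamma_s$ is aligned with $(1, g_-)^T \in \mathcal{C}^s$ and $\mathcal{C} \cap \mathcal{C}^s = \{\mathbf{0}\}$. With $z_0 \in \Gamma^\circ$, a positive-length subsegment $\Gamma_0 \subset \Gamma$ about $z_0$ lies in $M$. On $M$, $H^4$ acts linearly with derivative $DH^4_{z_p}$, which is hyperbolic with expanding eigenvalue $\lambda > 1$ and eigendirection $\mathbf{e}_u = (1, g_+)^T$. The factorisation $DH^4_{z_p} = DG \cdot DF \cdot DG \cdot DF^3$, combined with $DF^k \mathcal{C} \subset \mathcal{C}'$ and $DG^l \mathcal{C}' \subset \mathcal{C}$, implies $DH^4 \mathcal{C} \subset \mathcal{C}$, so iterates of $\Gamma$ under $H^4|_M$ remain aligned with $\mathcal{C}$.

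Define $\Gamma_n = \{z \in \Gamma_0 : H^{4k}(z) \in M \text{ for all } 0 \leq k \leq n\}$. Since $H^{4n}(z_0) \in \gamma_s \subset M$ for every $n$, $\Gamma_n$ contains $z_0$ in its relative interior. By the linear hyperbolic action, $H^{4n}(\Gamma_n)$ has direction converging to $\mathbf{e}_u$ and unstable length growing by factor $\lambda$ per iteration, so its endpoints on $\partial M$ approach the exits of the line through $z_p$ in direction $\mathbf{e}_u$. The threshold $\alpha_2$ is characterised by the geometric condition that, for $\alpha > \alpha_2$, this limiting line exits $M$ through $\mathscr{L}_2$ on one side of $z_p$ and $\mathscr{L}_3$ on the other, rather than through $\partial S$ or the stable-direction preimage boundaries $\mathscr{M}_1, \mathscr{M}_2$. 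I would verify this using the closed-form expressions for $z_p$ and $g_+$, extracting $\alpha_2$ as the largest root of the polynomial arising from the critical tangency, namely where the $\mathbf{e}_u$-line through $z_p$ meets a corner of $\partial M$ (a natural candidate being $\mathscr{L}_3 \cap \mathscr{L}$ or $\mathscr{L}_2 \cap \partial S_1$).

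Granted this, for $n$ sufficiently large $H^{4n}(\Gamma_n) \subset H^{4n}(\Gamma)$ is aligned with some $v' \in \mathcal{C}$ and connects $\mathscr{L}_2$ to $\mathscr{L}_3$. Applying Lemma \ref{lemma:Lkmapping}(2) to this subsegment, $H^{4(n+1)}(\Gamma)$ contains a $v$-segment, completing the proof with $k = 4(n+1)$. The main obstacle is the geometric verification pinning down $\alpha_2$: although the inclination-type mechanism is standard, one must explicitly track the boundaries of $M$ as functions of $\alpha$ and rule out premature exits through $\partial S$ or $\mathscr{M}_1, \mathscr{M}_2$, which align with the stable direction (as a short computation shows both preimage curves have the slope $g_-$) and so intuitively should not be crossed by iterates stretching along $\mathbf{e}_u$. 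The closeness of $\alpha_2 \approx 2.127$ to $\alpha_1 \approx 2.125$ suggests the critical geometry is closely tied to inequality (\ref{eq:2inequality}) of Lemma \ref{lemma:Lkmapping}(2).
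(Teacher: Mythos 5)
Your setup is sound: the transversality $\mathcal{C}\cap\mathcal{C}^s=\{\mathbf{0}\}$, the use of the linear hyperbolic action of $DH^4_{z_p}$ on $M$, and the $\lambda$-lemma mechanism driving $H^{4n}(\Gamma_n)$ onto $\gamma_u$ all match the paper's argument. The gap is in the endgame. You want the limiting segment $\gamma_u$ to exit $M$ through $\mathscr{L}_2$ on one side and $\mathscr{L}_3$ on the other, so that a late iterate of $\Gamma$ inherits this and Lemma~\ref{lemma:Lkmapping}(2) applies. But $\gamma_u$ has its endpoints on $H^4(\mathscr{M}_1)\subset\partial S_1$ and $H^4(\mathscr{M}_2)\subset\partial S_3$, and it crosses $\mathscr{L}_3$ only for $\alpha>\alpha_3\approx 2.694$ (the parameter where $y_u=y_2$). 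For $\alpha_2\approx 2.127<\alpha<\alpha_3$ — which is precisely the regime the lemma needs to cover to be useful downstream — $\gamma_u$ meets $\mathscr{L}$ below the point $\mathscr{L}_3\cap\mathscr{L}$ and never reaches $\mathscr{L}_3$, so no subsegment of any $H^{4n}(\Gamma_n)$ can join $\mathscr{L}_2$ to $\mathscr{L}_3$, and Lemma~\ref{lemma:Lkmapping}(2) is not available.

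The paper's resolution is not a refinement of the geometry of $\partial M$ but a further chase: one pushes $\gamma_u$ through one or two more return steps. Concretely, $\gamma_u' := F^3(\gamma_u\cap A_3)\subset F_S(\gamma_u)$ is examined; if $\alpha>\alpha_3$ it is an $h$-segment intersecting $\mathscr{I}_2$ and $\mathscr{I}_2^\star$ directly, and otherwise one shows $\gamma_u'$ meets $\mathscr{I}^\star$ (for $\alpha\ge\alpha_4\approx 2.1239$), maps $\gamma_u'':=G(\gamma_u'\cap B_1^\star)\subset H_S(\gamma_u)$, and then checks $F(\gamma_u''\cap A_1^\star)$ intersects $\mathscr{I}_2$ and $\mathscr{I}_2^\star$. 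The threshold $\alpha_2$ is exactly the parameter at which the $\partial S_4$-endpoint of $\gamma_u''$ lies on $\mathscr{I}_2^\star$ — a condition two or three return steps downstream of $\gamma_u$, not a corner-tangency of $\gamma_u$ with $\partial M$ as you conjecture. (Your suggestion that $\alpha_2$ is tied to inequality~(\ref{eq:2inequality}), which gives $\alpha_1$, is also off for the same reason: $\alpha_1$, $\alpha_2$, and $\alpha_4$ arise from three distinct geometric constraints that happen to be numerically close.) To repair your proof you would need to replace ``connects $\mathscr{L}_2$ to $\mathscr{L}_3$'' by this multi-step intersection property, and then argue — as the paper does — that since the relevant intersections of $\gamma_u$, $\gamma_u'$, $\gamma_u''$ lie in relative interiors, the same intersections hold for $\Gamma_i$ once $i$ is large.
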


\begin{proof}
    We essentially apply the inclination or $\lambda$-lemma. Let $\Gamma_0 = \Gamma \cap M$ and iteratively define $\Gamma_i := H^4(\Gamma_{i-1} \cap M)$. This generates sequence of line segments $\Gamma_i \subset H^{4i}(\Gamma)$, aligned with $v_i := DH_{z_p}^{4i} v$, which pass through $z_i := H^{4i}(z_0) \in \gamma_s$. In effect, $\Gamma_i$ limits exponentially fast onto the unstable manifold $\gamma_u$ through $z_p$, with gradient $g_+$ and endpoints on $H^4(\mathscr{M}_1) \subset \partial S_1$ and $H^4(\mathscr{M}_2) \subset \partial S_3$ (plotted as the dashed line in Figure \ref{fig:inclination}). 
    
    For all $\alpha > \alpha_2$ we claim that either $F_S(\gamma_u)$ contains a segment $\gamma_u'$ whose relative interior intersects 
    $\mathscr{I}_2$ and $\mathscr{I}_2^\star$, or $F_S \circ H_S(\gamma_u)$ satisfies this intersection property. The lemma then follows, noting we can find finite $i$ such that $F_S(\Gamma_i)$ or $F_S \circ H_S(\Gamma_i)$ similarly intersects $\mathscr{I}_2$ and $\mathscr{I}_2^\star$, so that $H_S(\Gamma_i)$ or $H_S^2(\Gamma_i)$ contains a $v$-segment. In particular, letting $j$ denote the $j$th image $H^j(\Gamma_i)$ of $\Gamma_i$ containing this $v$-segment, $k$ in the lemma statement is given by $k=4i+j$.

    For all $\alpha>2$ the segment $\gamma_u$ intersects $\mathscr{L}_3^2$ and $\mathscr{L}$; write this latter intersection as $(x_u,y_u)$. If $\alpha > \alpha_3 \approx 2.694$, the parameter value for which $y_u = y_2$, the manifold $\gamma_u$ also intersects $\mathscr{L}_3$. The image $\gamma_u' = F^3(\gamma_u \cap A_3) \subset F_S(\gamma_u)$ is then a $h$-segment, intersecting $\mathscr{I}_2$ and $\mathscr{I}_2^\star$ in the desired fashion. Otherwise $\gamma_u'$ connects $(x_u',y_u)$ to $\partial S_2$, where $x_u' = 2\alpha(y-y_u)$ (as the image $F^3(\mathscr{L})$ has gradient $\frac{1}{2\alpha}$). The segment $\gamma_u'$ then intersects $\mathscr{I}^\star$ (with parent line $y= 1/\alpha + \alpha x -1$) provided that
    \[  y_u \geq 1/\alpha + \alpha x_u' -1 . \]
    This holds for $\alpha \geq \alpha_4 \approx 2.1239$, the precise parameter for which $(x_u',y_u) \in \mathscr{I}^\star$. Writing $(x,y) = \gamma_u' \cap \mathscr{I}^\star$,
    \[ x =  \frac{y - \frac{1}{\alpha} + 1}{\alpha},\]
    by cone alignment $\gamma_u'$ has non-negative gradient so $y \geq y_u$. The segment $\mathscr{I}^\star$ maps into $\partial S_4$ under $G$, so $\gamma_u'' = G(\gamma_u' \cap B_1^\star) \subset H_S(\gamma_u)$ joins $\partial S_2$ to $(x, 1/\alpha) \in \partial S_4$. In particular this endpoint lies to the right of $\mathscr{I}_2^\star$ provided
    \[ \frac{y - \frac{1}{\alpha} + 1}{\alpha} > \frac{1}{2}\left( \frac{1}{\alpha} + \frac{1}{\alpha^2} \right), \]
    the $x$-coordinate of $\mathscr{I}_2^\star \cap \partial S_4$. By $y \geq y_u$ it is sufficient to check this bound for $y=y_u$. Indeed it holds for $\alpha > \alpha_2 \approx 2.127$. As it lies in $F^3(\mathscr{L}_3^2)$, the other endpoint of $\gamma_u''$ on $\partial S_2$ lies below $\mathscr{L}^\star \cap \partial S_2$ so that $\gamma_u''$ intersects $\mathscr{L}^\star$. The image $F(\gamma_u'' \cap A_1^\star) \subset F_S\circ H_S(\gamma_u)$ then connects $\partial S_1$ to $(x , 1/\alpha)$, intersecting $\mathscr{I}_2$ and $\mathscr{I}_2^\star$ in the desired fashion.
\end{proof}

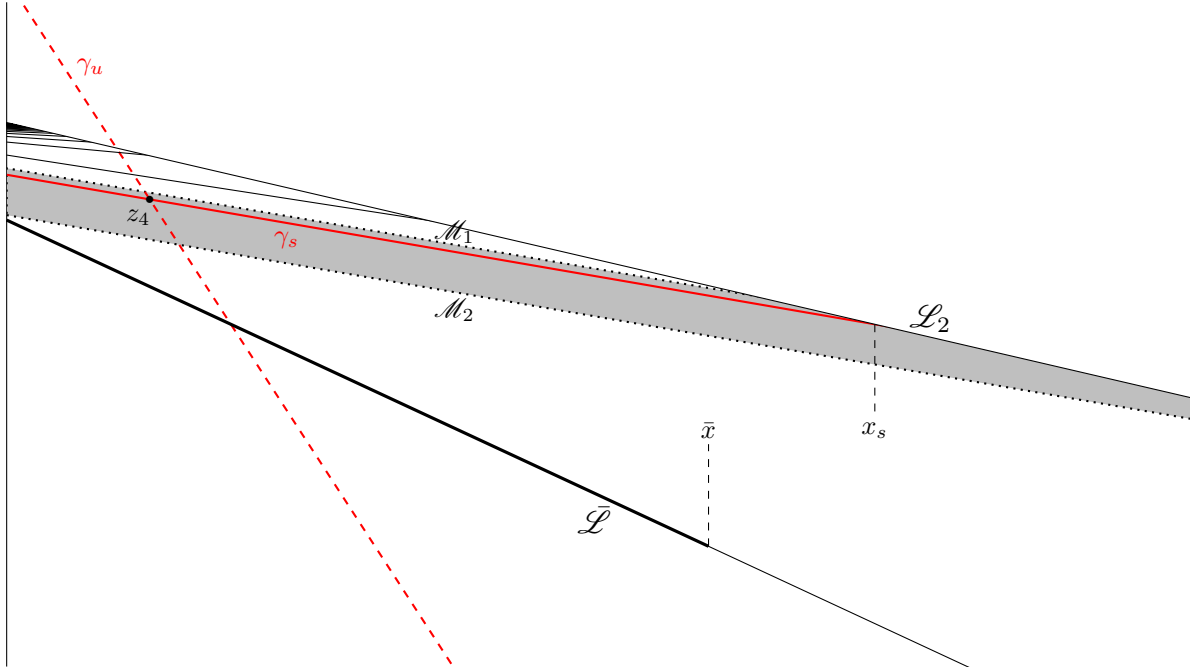
\begin{figure}
    \centering

    \begin{tikzpicture}[scale = 8]
    \tikzmath{\a = 2.15;}

    \draw (0,  {10/(2*\a) + 0.2 }) -- (0, { 10*(\a-1)/(2*\a*\a - \a) -0.2}   );
    
    \begin{scope}
    \clip (0, { 10*(\a-1)/(2*\a*\a - \a) -0.2}   ) rectangle ( { 10*(-\a*\a + 3*\a -1)/(2*\a*\a - \a) +0.8}   , {10/(2*\a) + 0.2 });

    \begin{scope}
        \clip (0,5/\a) -- (0,{10/(\a^2)}) -- ( {10*(3/\a - 1)/2} , {10*(1/\a - 1/(\a^2))/2}  ) -- (10/\a, {10*(1/\a - 1/(\a^2))/3}) -- (10/\a, {10*(1/\a - 1/(\a^2))/2}) -- (0,5/\a);
        \filldraw[thick, dotted, fill = gray!50] (  0, { 10*(\a^3 + \a^2 -3*\a -1)/( 1-7*\a^2 + 3*\a^4 )  }  ) -- (  10, { 10*(\a^2 -\a - 1)/( 1-7*\a^2 + 3*\a^4 )  }  ) -- (  10, { 10*(1-\a)/(-3*\a^3 + 4*\a )  }  ) --   (  0, { 10*(1-\a)*(\a+2)/(-3*\a^3 + 4*\a )  }  ) -- (  0, { 10*(\a^3 + \a^2 -3*\a -1)/( 1-7*\a^2 + 3*\a^4 )  }  );
        \draw[ red, domain=0:{4/\a}, smooth, variable=\x,  thick] plot ({\x}, { 10*(\a^2 + \a -4)/( 3*\a^3 - 8*\a) + (   (4-2*\a^2)/(3*\a^3-6*\a + (9*\a^6-48*\a^4+76*\a^2-32 )^0.5 ) )*(\x - 10*(2*\a - 4)/( 3*\a^3 - 8*\a))    });
    \end{scope}
     \draw[red, domain=0:{2/\a}, smooth, variable=\x, dashed,  thick] plot ({\x}, { 10*(\a^2 + \a -4)/( 3*\a^3 - 8*\a) + (   (4-2*\a^2)/(3*\a^3-6*\a - (9*\a^6-48*\a^4+76*\a^2-32 )^0.5 ) )*(\x - 10*(2*\a - 4)/( 3*\a^3 - 8*\a))    });

    \node[scale=0.7] at ({ 10*(2*\a - 4)/( 3*\a^3 - 8*\a)  },{ 10*(\a^2 + \a -4)/( 3*\a^3 - 8*\a)   }) {$\bullet$};
    
    \draw[very thick] ( 0, {10/(\a*\a)} ) -- (  { 10*(-\a*\a + 3*\a -1)/(2*\a*\a - \a)}   ,   { 10*(\a-1)/(2*\a*\a - \a)}   );
    \draw[dashed] (  { 10*(-\a*\a + 3*\a -1)/(2*\a*\a - \a)}   ,   { 10*(\a-1)/(2*\a*\a - \a)}   ) -- (  { 10*(-\a*\a + 3*\a -1)/(2*\a*\a - \a)}   ,   { 3.85/\a}   );
    \node at (  { 10*(-\a*\a + 3*\a -1)/(2*\a*\a - \a)}   ,   { 3.9/\a}   ) {$\bar{x}$};

    \draw[dashed] (  {  ( 10 - 2*\a*10*(\a^2 + \a -4)/( 3*\a^3 - 8*\a) + 2*\a*(   (4-2*\a^2)/(3*\a^3-6*\a + (9*\a^6-48*\a^4+76*\a^2-32 )^0.5 ) )*10*(2*\a - 4)/( 3*\a^3 - 8*\a)  )/(  1 + 2*\a*    (4-2*\a^2)/(3*\a^3-6*\a + (9*\a^6-48*\a^4+76*\a^2-32 )^0.5 ) ) }  ,  { (10 - ( 10 - 2*\a*10*(\a^2 + \a -4)/( 3*\a^3 - 8*\a) + 2*\a*(   (4-2*\a^2)/(3*\a^3-6*\a + (9*\a^6-48*\a^4+76*\a^2-32 )^0.5 ) )*10*(2*\a - 4)/( 3*\a^3 - 8*\a)  )/(  1 + 2*\a*    (4-2*\a^2)/(3*\a^3-6*\a + (9*\a^6-48*\a^4+76*\a^2-32 )^0.5 ) ))/(2*\a)  }     ) -- (  {  ( 10 - 2*\a*10*(\a^2 + \a -4)/( 3*\a^3 - 8*\a) + 2*\a*(   (4-2*\a^2)/(3*\a^3-6*\a + (9*\a^6-48*\a^4+76*\a^2-32 )^0.5 ) )*10*(2*\a - 4)/( 3*\a^3 - 8*\a)  )/(  1 + 2*\a*    (4-2*\a^2)/(3*\a^3-6*\a + (9*\a^6-48*\a^4+76*\a^2-32 )^0.5 ) ) }    ,   { 3.95/\a}   );

    \node at (  {  ( 10 - 2*\a*10*(\a^2 + \a -4)/( 3*\a^3 - 8*\a) + 2*\a*(   (4-2*\a^2)/(3*\a^3-6*\a + (9*\a^6-48*\a^4+76*\a^2-32 )^0.5 ) )*10*(2*\a - 4)/( 3*\a^3 - 8*\a)  )/(  1 + 2*\a*    (4-2*\a^2)/(3*\a^3-6*\a + (9*\a^6-48*\a^4+76*\a^2-32 )^0.5 ) ) }    ,   { 3.9/\a}   ) {$x_s$};

    \node[red] at (0.3/\a,5.2/\a) {$\gamma_u$};
    \node[red] at (1/\a,4.58/\a) {$\gamma_s$};

    \node at (0.47/\a,4.66/\a) {$z_4$};

    \node[scale=1.2] at (2.1/\a,3.58/\a) {$\bar{\mathscr{L}}$};
    \node[scale=1.2] at (3.3/\a,4.3/\a) {$\mathscr{L}_2$};

    \node at (1.6/\a, 4.61/\a) {$\mathscr{M}_1$};
        \node at (1.6/\a, 4.33/\a) {$\mathscr{M}_2$};
    




    \draw (0, {10/(\a*\a)  }) -- (10/\a,0);

    \foreach \k in {2,...,120}{
    \draw ( {10*(\k -\a)/( \a*\k - \a  ) }  ,{ 10*(\a -1)/( \a*\a*\k - \a*\a ) } ) -- (10,0);

    }

    \begin{scope}
        \clip (10,0) -- ( {10*(2 -\a)/( \a*2 - \a  ) }  ,{ 10*(\a -1)/( \a*\a*2 - \a*\a ) } ) -- (0,0) -- (10,0);
        \foreach \n in {1,...,50}{
        \draw ( 0 , {10*(\n + 1/\a)/( (2*\n+1 )*\a )}) -- ( 10/\a , {10*\n/((2*\n+1)*\a ) } );
        }
    
    \end{scope}

  \end{scope}

\end{tikzpicture}
    \caption{Close up of the singularity set for $F_S$ near $p_2$, parameter value $\alpha=2.15$. }
    \label{fig:inclination}
\end{figure}

\begin{lemma}
    \label{lemma:L-L2}
    Let $\alpha > \alpha_2 \approx 2.127$. Let $\Gamma \subset S$ be a line segment aligned with some $v \in \mathcal{C}$. If $\Gamma$ connects $\mathscr{L}$ to $\mathscr{L}_2$, then there exists $k$ such that $H^k(\Gamma)$ contains a $v$-segment.
\end{lemma}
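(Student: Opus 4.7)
The plan is a case split based on the position of $\Gamma$'s endpoint on $\mathscr{L}$, reducing each case to one of the preceding lemmas. Write the endpoints of $\Gamma$ as $z_a \in \mathscr{L}$ and $z_b \in \mathscr{L}_2$, and let $x_a$ denote the $x$-coordinate of $z_a$. The lines $\mathscr{L}$ and $\mathscr{L}_3$ meet inside $S$ at $x_c = (3-\alpha)/(2\alpha)$: to the right of $x_c$ one has $\mathscr{L} < \mathscr{L}_3$, while to the left $\mathscr{L}_3 < \mathscr{L}$. Moreover $\mathscr{L}_3 \leq \mathscr{L}_2$ throughout $S$, since $\mathscr{L}_2 - \mathscr{L}_3 = (1-x)/(6\alpha) \geq 0$.

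Suppose first $x_a \geq x_c$. Then $z_a$ sits on $\mathscr{L}$ strictly below $\mathscr{L}_3$, and the steep cone-aligned slope of $\Gamma$ forces it to cross $\mathscr{L}_3$ before reaching $z_b$ on $\mathscr{L}_2$. The subsegment of $\Gamma$ from this crossing up to $z_b$ lies between $\mathscr{L}_3$ and $\mathscr{L}_2$; near $p_1$ this region is part of $A_3$, as the slivers $A_k$ ($k \geq 4$) all lie below $\mathscr{L}_3$. Lemma \ref{lemma:Lkmapping} part 2 then produces a $v$-segment inside $H^4$ of this subsegment, hence inside $H^4(\Gamma)$.

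In the remaining case $x_a < x_c$, the line $\mathscr{L}_3$ lies below $\mathscr{L}$ at $x_a$, no slivers $A_k$ intrude between $\mathscr{L}$ and $\mathscr{L}_2$, and $\Gamma \subset A_3$. Here I would argue instead that $\Gamma$ crosses the stable manifold $\gamma_s$ of $z_p$ in its relative interior, whereupon Lemma \ref{lemma:periodicPoint} closes the case. Since $\gamma_s$ has shallow slope $g_-$ in the stable cone while $\Gamma$ has steep slope in $\mathcal{C}$, the two parent lines cross in a single point, and the substantive work is to verify that this point lies strictly between the endpoints of $\gamma_s$ on $\mathscr{M}_1$ and $\mathscr{M}_2$. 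The troublesome range of $x_a$ requiring verification corresponds to the truncated subsegment $\bar{\mathscr{L}} \subset \mathscr{L}$ of Figure \ref{fig:inclination}; a direct calculation tracking $z_p$, $g_-$, and the positions of $\mathscr{M}_1$, $\mathscr{M}_2$ as explicit functions of $\alpha$ then establishes the inclusion precisely when $\alpha > \alpha_2$.

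The main obstacle is this last verification: the margin is tight, with $\alpha_2 \approx 2.127$ arising exactly from the condition that makes the inclusion sharp. Below $\alpha_2$ an endpoint of $\gamma_s$ falls on the wrong side of $\Gamma$'s parent line, the crossing claim fails, and the argument breaks down. Within the present scheme the bound is therefore essentially optimal, consistent with the remarks foreshadowed in section \ref{sec:remarks}.
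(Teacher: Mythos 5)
Your first case ($x_a \geq x_c$, equivalently $\Gamma$ intersecting $\mathscr{L}_3$) matches the paper's and is handled correctly. The second case, however, contains a genuine gap. You claim that whenever $x_a < x_c$ the segment $\Gamma$ lies in $A_3$ and necessarily meets $\gamma_s$ in $\Gamma^\circ$, so that Lemma~\ref{lemma:periodicPoint} applies directly. Neither assertion holds in general. First, for $x_a < x_c$ a cone-aligned $\Gamma$ joining $\mathscr{L}$ to $\mathscr{L}_2$ will typically cross $\mathscr{L}_3^2$ (and higher $\mathscr{L}_k^2$) near $\partial S_1$ --- for instance when $\alpha > 7/3$ one has $x_c < 1 - 2/\alpha$, forcing $z_b$ above $\mathscr{L}_3^2$ --- so $\Gamma \not\subset A_3$. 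Second, and more importantly, $\gamma_s$ only spans from $\partial S_1$ to the point $(x_s,y_s) \in \mathscr{L}_2$; if $x_a$ exceeds $\bar{x}$ (the right endpoint of $\bar{\mathscr{L}}$, which is strictly less than $x_c$), a steep or vertical $\Gamma$ may land on $\mathscr{L}_2$ entirely to the right of $(x_s,y_s)$ and miss $\gamma_s$ altogether. The paper therefore splits your second case in two: for $\bar{x} < x_a < x_c$ (i.e.\ $y_a < \bar{y}$) it argues directly, using the fact that $F^3$ of the endpoint on $\tilde{\mathscr{L}}$ lands left of $\mathscr{I}_2$ so that $F_S(\Gamma)$ connects $\mathscr{I}_2$ to $\mathscr{I}_2^\star$ and $H^4(\Gamma)$ contains a $v$-segment; only for $x_a \leq \bar{x}$ does the periodic-point argument take over, where the inequality $\bar{x} < x_s$ guarantees the crossing of $\gamma_s$.

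A related inaccuracy: you attribute $\alpha_2$ to the sharpness of the crossing inclusion, but in the paper the crossing condition $\bar{x} < x_s$ is satisfied down to $\alpha_5 \approx 2.124$, and the binding bound $\alpha_2 \approx 2.127$ is inherited from the hypothesis of Lemma~\ref{lemma:periodicPoint} itself. You also describe $\gamma_s$ as having endpoints on $\mathscr{M}_1$ and $\mathscr{M}_2$; the paper verifies instead that over the relevant parameter range $\gamma_s$ meets $\partial M$ on $\partial S_1$ and $\mathscr{L}_2$. To repair your proof you need to insert the missing intermediate case, tracking the image of the $\mathscr{L}$-endpoint under $F^3$ as the paper does.
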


\begin{proof}
    Restrict $\Gamma$ to the open region bounded by $\mathscr{L},\mathscr{L}_2$. Observing Figure \ref{fig:Fsing}, points in $\Gamma$ return to $S$ over three or more iterates of $F$. Since $\alpha \geq \alpha_1$, if $\Gamma$ intersects $\mathscr{L}_3$ then the result holds with $k=4$ by Lemma \ref{lemma:Lkmapping}. Otherwise $\Gamma$ intersects the subset $\tilde{\mathscr{L}} \subset \mathscr{L}$ at a point $(1/\alpha - \alpha y, y)$ with $y_2 < y \leq 1/\alpha^2 $ (shown in bold in Figure \ref{fig:Fsing}). The endpoints of $\tilde{\mathscr{L}}$ map to $(0,y_2)$ and $(  3/\alpha -1 ,1/\alpha^2)$ under $F^3$, so that $F^3( \tilde{\mathscr{L}})$ lies entirely left of $\mathscr{I}_2$, see (\ref{eq:Ik}), if
    \[ \frac{1}{\alpha^2} \geq 2 \alpha \left(  \frac{3}{\alpha} -1  \right) -1 + \frac{1}{\alpha}, \]
    i.e. $\alpha > \left(3 + \sqrt{5} \right)/2 \approx 2.618$. In such a case $F_S(\Gamma)$ contains a segment joining $\mathscr{I}_2$ to $\mathscr{I}_2^\star$ and $H^4(\Gamma)$ contains a $v$-segment. For $\alpha \leq \left(3 + \sqrt{5} \right)/2$ the segment $F^3(\tilde{\mathscr{L}})$ intersects $\mathscr{I}_2$ at $(\bar{y},\bar{y})$,
    \begin{equation}
        \label{eq:ybar}
        \bar{y} = \frac{\alpha -1}{2\alpha^2 - \alpha},
    \end{equation}
    and $F^3(\Gamma)$ joins $\mathscr{I}_2$ to $\mathscr{I}_2^\star$ provided that $y< \bar{y}$. Analogous to before, $H^4(\Gamma)$ then contains a $v$-segment.

    It remains to consider the case where $\Gamma$ intersects the subset $\bar{\mathscr{L}} \subset \mathscr{L}$ at $(1/\alpha - \alpha y, y)$ satisfying $y \geq \bar{y}$. This segment is plotted in Figure \ref{fig:inclination}, with endpoints $(0,1/\alpha^2)$ and $(\bar{x},\bar{y})$ where
    \[ \bar{x} =  \frac{1}{\alpha} - \alpha \bar{y} = \frac{-\alpha^2 + 3 \alpha -1}{2\alpha^2 - \alpha}.  \]
    By cone alignment $\Gamma$ can then only intersect $\mathscr{L}_2$ at some point $(x, (1-x)/(2\alpha))$ with $0\leq x \leq \bar{x}$. Comparing the equation for the parent line $y-y_p = g_-(x-x_p)$ of $\gamma_s$ with those of $\partial M$ (for reference $\mathscr{M}_1$ and $\mathscr{M}_2$ lie on the lines
    \[  y= \frac{\alpha^2 -1}{-3 \alpha^3 + 4 \alpha} \left( x - \frac{1}{\alpha + 1} -1 \right) \text{ and } y= \frac{2 \alpha - \alpha^3}{1 - y \alpha^2 + 3 \alpha^4} \left( x - \frac{-\alpha^2 + \alpha + 1}{2 \alpha - \alpha^3} -1 \right)  \]
    respectively), one can verify that over the remaining parameter range $2 < \alpha \leq \left(3 + \sqrt{5} \right)/2$ the stable manifold intersects $\partial M$ on $\partial S_1$ and $\mathscr{L}_2$. Writing $(x_s,y_s) = \gamma_s \cap \mathscr{L}_2$, if $\bar{x} < x_s$ then $\Gamma$ intersects $\gamma_s$ at some point in $\Gamma^\circ$ and the result follows over $\alpha > \alpha_2$ by Lemma \ref{lemma:periodicPoint}. Indeed the inequality holds for all $\alpha > \alpha_5 \approx 2.124$, the parameter value for which $\bar{x} = x_s$.
\end{proof}

\section{Growth lemma}
\label{sec:growth}
Given a line segment $\Gamma \subset S$, we define its \emph{height} as $\ell_v(\Gamma) = \nu \left( \{y \,|\, (x,y) \in \Gamma \} \right)$ and \emph{width} as $\ell_h(\Gamma) = \nu \left( \{x \,|\, (x,y) \in \Gamma \} \right)$, where $\nu$ is the Lebesgue measure on $\mathbb{R}$.

\begin{lemma}
    \label{lemma:Fgrowth}
    Let $\alpha > \alpha_0 \approx 2.1319$. Let $\Gamma \subset S$ be a line segment aligned with some $v \in \mathcal{C}$. Either:
    \begin{enumerate}[label={(C\arabic*):}]
        \item There exists $\delta >0$ such that $F_S(\Gamma)$ contains a segment $\Gamma'$ with $\ell_h(\Gamma') > (1+ \delta) \, \ell_v(\Gamma)$, or
        \item $\Gamma$ connects $\mathscr{L}_{k-1}$ to $\mathscr{L}_k$ or $\mathscr{L}_{k-1}^\star$ to $\mathscr{L}_k^\star$ for some $k \geq 3$, or
        \item There exists $k$ such that $H^k(\Gamma)$ contains a $v$-segment.
    \end{enumerate}
\end{lemma}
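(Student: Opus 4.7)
The plan is to partition $\Gamma$ along the singularity set of $F_S$---the segments $\mathscr{L}, \mathscr{L}^\star$ together with the families $\mathscr{L}_k, \mathscr{L}_k^\star, \mathscr{L}_k^2, \mathscr{L}_k^{2\star}$---and either locate a single piece whose image realises (C1), or show that the configuration is of type (C2) or (C3). Begin by observing that on each cell $A$ of the induced partition, $F_S|_A = F^k$ is affine for the corresponding return time $k$. A sub-segment $\Gamma' \subset A$ aligned with $v = (v_1, v_2) \in \mathcal{C}$ satisfies
\[
\ell_h(F^k \Gamma')/\ell_v(\Gamma') = |v_1/v_2 + k\alpha| \geq L + k\alpha \geq L + \alpha,
\]
and since $L$ is the negative root of $L^2 + \alpha L + 1 = 0$ we have $L + \alpha = -1/L > 1$ for all $\alpha > 2$. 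Hence if $\Gamma$ lies in a single cell, (C1) holds with $\delta = L + \alpha - 1$.

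Assume now that $\Gamma$ crosses singularities. The cone alignment keeps $\Gamma$ steeply sloped, so it meets the singularities of at most one accumulation family (WLOG the family near $p_1$; the other accumulations follow via the involutions $I_1, \ldots, I_4$), together with possibly $\mathscr{L}$. Partition $\Gamma$ into pieces $\Gamma^{(k)} \subset A_k$ and compute their heights and image widths using the parent-line equations (\ref{eq:L})--(\ref{eq:Ik}). The key algebraic identity is the following: if $\Gamma$ has an endpoint precisely on $\mathscr{L}_{k_1 - 1}$ and extends into the accumulation, the telescoping sum of piece heights $\sum_{k \geq k_1} h_k$ equals exactly $(1/\alpha)$ times the image width of the first piece $\Gamma^{(k_1)}$ alone. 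Hence if $\Gamma$ lies entirely within the accumulation and is not of type (C2) (endpoints on consecutive $\mathscr{L}_{k-1}, \mathscr{L}_k$ for some $k \geq 3$), this single piece already realises (C1) with some $\delta > 0$.

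The critical remaining case is $\Gamma$ crossing $\mathscr{L}$, with an anchor piece in $A_1$ (below $\mathscr{L}$) and an accumulating tail starting in some $A_{k_1}$ (above $\mathscr{L}$) with $k_1 \geq 4$. If $\Gamma$ happens to connect $\mathscr{L}$ to $\mathscr{L}_2$, Lemma \ref{lemma:L-L2} delivers (C3) directly. Otherwise compare the image widths of the anchor piece ($(L+\alpha)h_1$) and the best piece of the accumulating tail. Minimising the maximum of the two over admissible configurations produces, for the worst-case $v \in \mathcal{C}$, a ratio to total height of the form $(L+\alpha)(L + k_1\alpha)/(2L + (k_1+1)\alpha)$. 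The threshold $\alpha_0 \approx 2.1319$ is then pinned down as the smallest $\alpha$ for which this ratio exceeds $1$ across all admissible $k_1 \geq 4$, thereby giving (C1). The $p_2$-accumulation yields an analogous analysis via the $\mathscr{L}_k^2$ family, the cell $A_2$ and the involution $I_2$, and must not produce a tighter threshold.

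The main obstacle is this last case. When $\Gamma$ enters the accumulation through an arbitrary interior point of $\mathscr{L}$ (rather than precisely at $\mathscr{L}_{k_1-1}$), the clean telescoping identity used in the accumulation-only subcase no longer applies. One must compute several image widths along the tail and optimise over the entry position on $\mathscr{L}$, the cone direction $s = v_1/v_2$, and $k_1$. This minimax computation is where $\alpha_0$ is determined; verifying that no admissible configuration---including those interacting with the $p_2$-accumulation via $\mathscr{L}_k^2$---yields a smaller threshold is the principal technical difficulty.
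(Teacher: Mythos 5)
Your overall strategy—partition along the $F_S$-singularity set, then locate either an expanding piece, a consecutive pair $\mathscr{L}_{k-1},\mathscr{L}_k$, or a reduction to Lemma~\ref{lemma:L-L2}—is broadly aligned with the paper's proof, and your single-cell case and the telescoping observation for a segment confined to the $p_1$-accumulation are both correct. However, there are two genuine gaps.

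First, the derivation of $\alpha_0$ is wrong. You set up the critical configuration as an anchor in $A_1$ plus a tail starting at some $A_{k_1}$ with $k_1\geq 4$, and extract the two-piece ratio $E_1E_{k_1}/(E_1+E_{k_1})$, i.e.\ the condition $1/E_1+1/E_{k_1}<1$. But a cone-aligned segment leaving $A_1$ through $\mathscr{L}$ generically lands in $A_3$ before reaching $A_4$, so the worst admissible three-cell configuration (which cannot be dismissed via (C2) or Lemma~\ref{lemma:L-L2}) is $K=[1,3,4]$. The paper defines $\alpha_0$ precisely by $1/E_1+1/E_3+1/E_4=1$. Dropping the $A_3$ term gives a strictly weaker inequality and therefore a smaller, incorrect threshold—so your minimax does not actually produce $\alpha_0\approx 2.1319$. (The paper's complexity bound is the statement: if $\sum_{k\in K}1/E_k<1$ over the return-time cells meeting $\Gamma$, some image piece grows; you reconstruct a two-term special case of this, but never state or use the general inequality, which is what organises the case analysis.)

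Second, the $p_2$-accumulation is not "analogous via involutions." The involutions $I_1,\ldots,I_4$ relate $p_i$ to $p_i^\star$ and $F$-structure to $G$-structure; none of them maps the $\mathscr{L}_k$ family near $p_1$ to the $\mathscr{L}_k^2$ family near $p_2$. Moreover (C2) only covers $\mathscr{L}_{k-1}\!\to\!\mathscr{L}_k$ (and its $\star$), not $\mathscr{L}_{k-2}^2\!\to\!\mathscr{L}_k^2$, so the "traverses a cell $\Rightarrow$ (C2)" escape that works near $p_1$ is unavailable near $p_2$. The paper handles this by an inductive argument anchored on the period-4 point $z_p$ and its stable manifold $\gamma_s$ (Lemma~\ref{lemma:periodicPoint}): a segment traversing some $A_k^2$ that violates the lemma must avoid $\gamma_s$, which bounds its height, and the induction then forces it to traverse $A_{k-2}^2, A_{k-4}^2,\ldots$ down to a contradiction. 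None of this appears in your sketch, and you explicitly concede the "arbitrary entry point" minimax is unresolved. That unresolved computation is precisely where the stable-manifold induction is needed, so the gap is not merely a tedious optimisation you have deferred—it is a missing idea.
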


\begin{proof}

Suppose first that $\Gamma$ intersects $\mathscr{L}_2$ and $\mathscr{L}_2^\star$. Then $\Gamma' = F^2(\Gamma \cap A_2) \subset F_S(\Gamma)$ is a $h$-segment and $\Gamma'' = G^2(\Gamma' \cap B_2) \subset G_S(\Gamma')$ is a $v$-segment, so that (C3) follows with $k=3$. Otherwise $\Gamma$ lies entirely below the segment $\mathscr{L}_2^\star$ or entirely above $\mathscr{L}_2$. We consider the first case now, addressing the latter case at the end.

Suppose $\Gamma$ lies entirely within some set of constant return time $k$ under $F$. The image $\Gamma' = F_S(\Gamma)$ then satisfies $\ell_h(\Gamma) \geq E_k \ell_v(\Gamma)$ where
    \begin{equation}
        \label{eq:expansionFactors}
        E_k(\alpha) := \inf_{v \in C} \frac{\|DF^k v\|_\infty}{\| v \|_\infty} = k \alpha + L
    \end{equation}
denote minimum expansion factors under the $\| \cdot \|_\infty$ norm. For all $\alpha >2$ and $k \in \mathbb{N}$ this factor is strictly greater than 1 so (C1) holds. More generally, suppose $\Gamma$ splits into multiple components $\Gamma_i$, $i \in I$, of constant return time under $F$. Denoting the list of these return times by $K = [r(\Gamma_i;,F,S) \, | \, i \in I]$, if
\begin{equation}
    \label{eq:growthCondition}
    \sum_{k \in K} \frac{1}{E_k(\alpha)} < 1
\end{equation}
then there exists $i \in I$ such that $\Gamma' = F_S(\Gamma_i)$ similarly satisfies $\ell_h(\Gamma') > \ell_v(\Gamma)$\footnote{This is a basic complexity estimate; a detailed proof is found in \cite{myers_hill_exponential_2022}.}.

Suppose $ \# K<4$. If $\Gamma$ intersects $A_1$ and $A_2$, it must connect $\mathscr{L}$ to $\mathscr{L}_2$. By Lemma \ref{lemma:L-L2}, (C3) follows. Otherwise, noting that $E_k$ is strictly increasing in $k$, the summation (\ref{eq:growthCondition}) is bounded above by that on $K = [ 1, 3, 4]$. Letting $\alpha_0 \approx 2.1319$ denote the parameter value for which
\[ \frac{1}{E_1(\alpha_0)} + \frac{1}{E_3(\alpha_0)} + \frac{1}{E_4(\alpha_0)} = 1,  \]
in the case $\# K <4$, the lemma then follows for $\alpha > \alpha_0$.

Suppose, then, that $\# K \geq 4$. If $\Gamma$ avoids the secondary accumulation sets $A_k^2$ then, noting Figure \ref{fig:Fsing}, there exists $k \geq 3$ such that $\Gamma$ intersects $A_{k-1}$, $A_k$, and $A_{k+1}$. In doing so it connects $\mathscr{L}_{k-1}$ to $\mathscr{L}_k$ so that (C2) is satisfied. Assume, then, that $\Gamma$ intersects at least one of the secondary accumulation sets. Again noting Figure \ref{fig:Fsing}, $\Gamma$ either 
\begin{enumerate}
    \item Connects $\mathscr{L}_3$ to $\mathscr{L}_3^2$, or
    \item Intersects some trio $A_{k-2}^2$, $A_k^2$, $A_{k+2}^2$ for some $k \geq 5$ (defining $A_3^2 = A_3$).
\end{enumerate}
In the first case $\Gamma' = F^3(\Gamma \cap A_3) \subset F_S(\Gamma)$ is a $h$-segment and $\Gamma'' = G^2(\Gamma' \cap B_2) \subset G_S(\Gamma')$ is a $v$-segment. Moving onto the second case, it follows that 
\begin{enumerate}[label={($\dag$):}]
        \item $\Gamma$ traverses $A_k^2$, connecting $\mathscr{L}_{k-2}^2$ to $\mathscr{L}_{k}^2$, for some $k \geq 5$.
    \end{enumerate}
By (\ref{eq:Lk2}) each $\mathscr{L}_k^2$ intersects $\partial S_1$ at the point
\begin{equation}
    \label{eq:Yk}
    (0,Y_k) = \left ( 0 , \frac{(k-1)\alpha + 2}{2k \alpha^2} \right)
\end{equation}
and $\mathscr{L}_2$ at the point
\begin{equation}
    \label{eq:YkL2}
    \left( \frac{\alpha-2}{(k-2)\alpha} , \frac{(k-3)\alpha + 2}{2 (k-2) \alpha^2} \right) = \left( \frac{\alpha-2}{(k-2)\alpha} , Y_{k-2} \right) ;
\end{equation}
see the magnified part of Figure \ref{fig:Fsing}.

Suppose that $\Gamma$ satisfying ($\dag$) violates the lemma; we will show that this leads to a contradiction by an inductive argument. To avoid satisfying (C1) the restriction $\Gamma_2 = \Gamma \cap A_2$ must satisfy
\[ \frac{\ell_v(\Gamma_2)}{\ell_v(\Gamma)} \leq \frac{1}{E_2},  \]
giving
\begin{equation}
    \label{eq:proportionEstimate}
    \frac{\ell_v(\tilde{\Gamma})}{\ell_v(\Gamma)} \geq 1 - \frac{1}{E_2}
\end{equation}
where $\tilde{\Gamma} = \Gamma \setminus \Gamma_2$. As the base stage of the induction suppose ($\dag$) holds with $k=5$, intersecting $\mathscr{L}_3^2$ and $\mathscr{L}_5^2$. Noting Lemma \ref{lemma:periodicPoint}, to violate the lemma we must have $\Gamma^\circ \cap \gamma_s = \varnothing$. This gives an upper bound
\begin{equation}
    \label{eq:baseCaseUpper}
    \ell_v(\tilde{\Gamma}) \leq \frac{1}{2\alpha} - y_m
\end{equation}
where $(x_m,y_m)$ denotes the intersection of $\gamma_s$ with the line passing through $\mathscr{L}_5^2 \cap \mathscr{L}_2$, gradient $1/L$, the lowest point along $\gamma_s$ that a segment intersecting $\mathscr{L}_5^2$ and aligned with some $v \in \mathcal{C}$ can hit. 
Using (\ref{eq:YkL2}), $y_m$ is given by
\[ y_m = \frac{ g_-( 1-2 \alpha Y_3 - x_p - L Y_3 ) + y_p  }{1-Lg_-}. \]
The shortest height of any segment aligned with $v \in \mathcal{C}$ connecting $\mathscr{L}_{k-2}^2$ to $\mathscr{L}_k^2$ is that which lies on $\partial S_1$, given by $Y_k - Y_{k-2}$. Denoting $\Gamma_5 = \Gamma \cap A_5^2$, by (\ref{eq:proportionEstimate}) then (\ref{eq:baseCaseUpper}) the image $\Gamma' = F^5(\Gamma_5) \subset F_S(\Gamma)$ satisfies
\begin{equation*}
    \begin{split}
        \ell_h(\Gamma') - \ell_v(\Gamma) & \geq \ell_h(\Gamma') - \frac{1}{1 - \frac{1}{E_2}} \ell_v(\tilde{\Gamma}) \\
        & \geq  (Y_5-Y_3) E_5 - \frac{1}{1 - \frac{1}{E_2}} \left( \frac{1}{2\alpha} - y_m \right) \\
    \end{split}
\end{equation*}
which is positive for all $\alpha > \alpha_7 \approx 2.072$. Noting $\alpha_0 > \alpha_7 $, if $\Gamma$ violates the lemma it cannot traverse $A_5^2$.

For the inductive step assume $\Gamma$ traverses $A_k^2$, but does not traverse $A_{k-2}^2$. It therefore intersects $\mathscr{L}_k^2$ and $\mathscr{L}_{k-2}^2$, but does not intersect $\mathscr{L}_{k-4}^2$. Analogous to (\ref{eq:baseCaseUpper}) this gives an upper bound
\[ \ell_v(\tilde{\Gamma}) < \frac{1}{2\alpha} - y_l  \]
where $(x_l,y_l)$ denotes the intersection of $\mathscr{L}_{k-4}^2$ with the line passing through $\mathscr{L}_k^2 \cap \mathscr{L}_2$, gradient $1/L$. Again using (\ref{eq:YkL2}), $y_l$ is given by
\begin{equation}
    \label{eq:yl}
    y_l = \frac{\left(k-4\right)\alpha Y_{k-4}-\frac{\alpha-2}{\left(k-2\right)\alpha}+LY_{k-2}}{L + \left(k-4\right)\alpha}.
\end{equation}
Analogous to the base case $\Gamma' = F^k(\Gamma \cap A_k^2)$ then satisfies
\begin{equation}
    \label{eq:inductiveEstimate}
    \ell_h(\Gamma') - \ell_v(\Gamma) > (Y_k-Y_{k-2}) E_k - \frac{1}{1 - \frac{1}{E_2}} \left( \frac{1}{2\alpha} - y_l \right) := f_\alpha(k).
\end{equation}
The function $f_\alpha(k)$ is positive\footnote{This is formally verified by noting that $f_\alpha(k)$ is continuous on $k > 9/2 \geq 4 - L/\alpha$ and shares its roots with $k(k-1)[L + (k-4)\alpha]f_\alpha(k)$, a quadratic with roots $k_1,k_2<7$ for $\alpha>\alpha_8$. This gives $\mathrm{sgn}f_\alpha(k) = \mathrm{sgn}f_\alpha(7)$ for all $k \geq 7$, with $f_\alpha(7)$ positive for all $\alpha>\alpha_8$.} for all $k \geq 7$ provided that $\alpha > \alpha_8 \approx 2.012$, the parameter value for which $f_{\alpha_8}(7) = 0$. It follows by induction that for $\Gamma$ to violate the lemma it must not traverse $A_7^2$, nor $A_9^2$, and so on. But this directly contradicts ($\dag$), so no such $\Gamma$ exists, verifying the lemma for the case where $\Gamma$ lies entirely below $\mathscr{L}_2^\star$.

The case where $\Gamma$ lies entirely above $\mathscr{L}_2$ follows similarly. The image $\Gamma^\star = I_1(\Gamma)$ lies entirely below the line $\mathscr{L}_2^\star$ and is aligned with $DI_1 v = -v \in \mathcal{C}$. By the above, $\Gamma^\star$ satisfies one of (C1-3). Noting that $I_1$ commutes with $F_S$ and preserves $\ell_h$, if $\Gamma^\star$ satisfies (C1), so does $\Gamma$. It similarly inherits (C2) or (C3) from $\Gamma^\star$, noting that $I_1$ interchanges $\mathscr{L}_k \leftrightarrow \mathscr{L}_k^\star$, commutes with $H$, and maps $v$-segments to $v$-segments.
\end{proof}

Recalling $\mathcal{H} = F \circ G$, we have an analogous result for growth under $G_S$:

\begin{lemma}
    \label{lemma:Ggrowth}
    Let $\alpha > \alpha_0 \approx 2.1319$. Let $\Lambda \subset S$ be a line segment aligned with some $v' \in \mathcal{C}'$ Either:
    \begin{enumerate}[label={(C\arabic*'):}]
        \item There exists $\delta >0$ such that $G_S(\Lambda)$ contains a segment $\Lambda'$ with $\ell_v(\Lambda') > (1+ \delta) \, \ell_h(\Lambda)$, or
        \item $\Lambda$ connects $\mathscr{I}_{k-1}$ to $\mathscr{I}_k$ or $\mathscr{I}_{k-1}^\star$ to $\mathscr{I}_k^\star$ for some $k \geq 3$, or
        \item There exists $k$ such that $\mathcal{H}^k(\Lambda$) contains a $h$-segment.
    \end{enumerate}
\end{lemma}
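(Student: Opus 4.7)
The plan is to deduce Lemma \ref{lemma:Ggrowth} directly from Lemma \ref{lemma:Fgrowth} by exploiting the involution $I_3$ together with relations (c) and (d) of the Fact. Recall $I_3(x,y)=(y,1/\alpha - x)\bmod 1$; these relations yield $G_S = I_3 \circ F_S \circ I_3^{-1}$ and $\mathcal{H}^k = I_3 \circ H^k \circ I_3^{-1}$ for every $k$. A direct computation confirms that $I_3$ restricts to an isometry of $S$ which swaps $\partial S_1 \leftrightarrow \partial S_4$ and $\partial S_2 \leftrightarrow \partial S_3$. Consequently $I_3$ carries $v$-segments to $h$-segments, and the length functionals transform by $\ell_h \circ I_3 = \ell_v$ and $\ell_v \circ I_3 = \ell_h$.

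The key preliminary step is to verify that $DI_3$ bijects $\mathcal{C}$ onto $\mathcal{C}'$. Since $DI_3$ acts as the $90^\circ$ rotation $(v_1,v_2)\mapsto(v_2,-v_1)$, the slope $v_1/v_2$ transforms to $-v_2/v_1$. The boundary slope $v_1/v_2=L$ of $\mathcal{C}$ is sent to $-1/L$, and the defining identity $L^2+\alpha L+1=0$ gives $-1/L = L+\alpha$, which is precisely the boundary slope of $\mathcal{C}'$; the other boundary $v_1/v_2=0$ of $\mathcal{C}$ is sent to the vertical direction, also on the boundary of $\mathcal{C}'$. Continuity of the linear map then gives the cone bijection.

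Given $\Lambda$ aligned with $v'\in\mathcal{C}'$, I would set $\Gamma = I_3^{-1}(\Lambda)$, which is a line segment in $S$ aligned with $DI_3^{-1}v' \in \mathcal{C}$, and invoke Lemma \ref{lemma:Fgrowth}. Each outcome for $\Gamma$ then translates into the desired outcome for $\Lambda$ by applying $I_3$. From (C1), the segment $\Lambda' = I_3(\Gamma') \subset G_S(\Lambda)$ satisfies $\ell_v(\Lambda') = \ell_h(\Gamma') > (1+\delta)\ell_v(\Gamma) = (1+\delta)\ell_h(\Lambda)$, giving (C1'). From (C2), noting that $\mathscr{I}_k = I_3(\mathscr{L}_k)$ (and similarly for the starred versions, per the labelling convention of Figure \ref{fig:Gsing}), the image $\Lambda$ connects the corresponding $\mathscr{I}$-segments, giving (C2'). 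From (C3), $\mathcal{H}^k(\Lambda) = I_3(H^k(\Gamma))$ contains the $h$-segment $I_3(\Sigma)$ whenever $H^k(\Gamma)$ contains a $v$-segment $\Sigma$, giving (C3').

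No new analytic estimates are required. The only non-trivial point is the cone correspondence under $DI_3$, which rests on the algebraic identity $-1/L = L+\alpha$ inherent to the definition of $L$; everything else is routine bookkeeping of boundary components and length functionals under a rigid motion.
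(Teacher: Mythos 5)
Your proof is correct and takes essentially the same approach as the paper: both conjugate by $I_3$ to reduce Lemma \ref{lemma:Ggrowth} to Lemma \ref{lemma:Fgrowth}, using Fact (c)/(d) together with the observations that $I_3$ carries the $\mathscr{L}_k^{(\star)}$ to the $\mathscr{I}_k^{(\star)}$, swaps $v$- and $h$-segments, and interchanges $\ell_v$ and $\ell_h$. Your explicit check of the cone correspondence $DI_3\mathcal{C}=\mathcal{C}'$ via the identity $L(L+\alpha)=-1$ is a small extra verification that the paper leaves implicit.
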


\begin{proof}
    Let $\Gamma = I_3^{-1}(\Lambda)$, a line segment in $S$ aligned with $v = DI_3^{-1}v' \in \mathcal{C}$. By Lemma \ref{lemma:Fgrowth}, one of (C1-3) follows. In case (C1) the segment $\Lambda' = I_3(\Gamma') \subset G_S(\Lambda)$ has height $\ell_v(\Lambda') = \ell_h(\Gamma') > (1+\delta) \, \ell_v(\Gamma) =  (1+\delta) \,\ell_h(\Lambda)$, satisfying (C1'). Cases (C2') and (C3') similarly follow from (C2) and (C3), noting that $I_3$ maps the $\mathscr{L}_k^{(\star)}$ onto the $\mathscr{I}_k^{(\star)}$, satisfies $I_3 \circ H^k = \mathcal{H}^k \circ I_3$, and maps $v$-segments into $h$-segments.
\end{proof}

\section{Proof of the main theorem}
\label{sec:proof}
\begin{proof}[Proof of Theorem \ref{thm:Bernoulli}]
    As noted in section \ref{sec:background}, it is sufficient to establish (\textbf{MR}). Given $\gamma_u(z)$, we iteratively apply Lemmas \ref{lemma:Fgrowth}, \ref{lemma:Ggrowth} to $\Gamma_0 \subset H^i\gamma_u(z) \subset S$, aligned with some $v \in \mathcal{C}$. This generates two sequences of line segments $(\Gamma_m)$, $(\Lambda_m)$ with $\Lambda_m = \Gamma_m'$ from (C1) and $\Gamma_m = \Lambda_{m-1}'$ from (C1'). Each $\Gamma_m$ lies in $H^{m'}(\Gamma_0)$ for some integer $m'$ and $\Lambda_m$ lies in $F \circ H^{m''}(\Gamma_0)$ for some integer $m'' \geq m'$. Their \emph{diameters} $\ell(\cdot) = \max \{ \ell_v(\cdot), \ell_h(\cdot) \}$ grow exponentially, so that after some finite number $m_1$ steps either $\Gamma_{m_1}$ satisfies (C2) or (C3), or $\Lambda_{m_1}$ satisfies (C2') or (C3').

    Starting with case (C3), we can find $k$ such that the image $H^k(\Gamma_{m_1}) \subset H^{m_1' + k}(\Gamma_0)$ contains a $v$-segment. Similarly in case (C3') the image $\mathcal{H}^k(\Lambda_{m_1}) \subset \mathcal{H}^k \circ F \circ H^{m_1''}(\Gamma_0) = F \circ H^{m_1'' +k}(\Gamma_0)$ contains a $h$-segment, connecting $\mathscr{I}_2$ to $\mathscr{I}_2^\star$. As seen previously, the image $H^{m_1'' +k + 2}(\Gamma_0)$ then contains a $v$-segment.

    For case (C2) write $\Gamma = \Gamma_{m_1}$; connecting $\mathscr{L}_{k-1}$ to $\mathscr{L}_k$ or $\mathscr{L}_{k-1}^\star$ to $\mathscr{L}_k^\star$ for some $k \geq 3$. If $k$ is odd, making use of the transformations $I_1$ and $I_3$, we may apply Lemma \ref{lemma:Lkmapping} some $k-3$ times until $H_S^{\frac{k-3}{2}}(\Gamma)$ contains a segment joining $\mathscr{L}_2$ to $\mathscr{L}_3$ or $\mathscr{L}_2^\star$ to $\mathscr{L}_3^\star$. The second part of Lemma \ref{lemma:Lkmapping}, together with the transformation $I_1$, ensures that we then map into a $v$-segment under $H^4$. Similarly if $k$ is even then $F_S \circ H_S^{\frac{k-4}{2}}(\Gamma)$ contains a segment $\Gamma'$ connecting $\mathscr{I}_2$ to $\mathscr{I}_3$ or $\mathscr{I}_2^\star$ to $\mathscr{I}_3^\star$ and lies in $F\circ H^{m_2}(\Gamma)$ for some integer $m_2$. Now using $I_1$, $I_3$, and Lemma \ref{lemma:Lkmapping}, the image $\mathcal{H}^4(\Gamma')$ contains a $h$-segment, connecting $\mathscr{I}_2$ to $\mathscr{I}_2^\star$. Hence $H \circ G \circ \mathcal{H}^4(\Gamma') \subset H^{6+m_2}(\Gamma)$ contains a $v$-segment. Case (C2') can then be reduced to case (C2), again making use of the transformation $I_3$.

    In any case, then, we can find $M_0$ such that $H^{M_0}(\Gamma_0)$ contains a $v$-segment $\Gamma$. Letting $\Gamma' = F^2(\Gamma \cap A_2)$, the image $\Gamma'' = G^2( \Gamma' \cap B_2) \subset H^3(\Gamma)$ is similarly a $v$-segment. Hence $H^{M_0+3}(\Gamma_0)$ contains a $v$-segment, as does $H^{M_0+3k}(\Gamma_0)$ for all $k \geq 1$ by induction. In particular $\Gamma''$ lies in $G^2(B_2)$, with endpoints on $\partial S_3$ and $\partial S_4$, bounded to the right by $I_2(\mathscr{I}_2)$ and to the left by $I_2(\mathscr{I}_2^\star)$. An example sketch is given in Figure \ref{fig:segments}. Such a segment intersects $\mathscr{L}_3$ provided $\mathscr{L}_3 \cap \mathscr{L} = ( 1/\alpha - \alpha y_2, y_2 )$ lies left of $I_2(\mathscr{I}_2^\star)$. Noting that $I_2(\mathscr{I}_2^\star)$ lies on the line $y = 1 - 2\alpha x$, this amounts to checking that
    \[ y_2 \leq 1-2\alpha \left( \frac{1}{\alpha} - \alpha y_2 \right).\]
    This is equivalent to (\ref{eq:2inequality}) so holds for all $\alpha \geq \alpha_1$. Noting $\alpha_0 > \alpha_1$, the segment $\Gamma''$ intersects $\mathscr{L}_2$ and $\mathscr{L}_3$. By Lemma \ref{lemma:Lkmapping}, $H^4(\Gamma'')$ contains a $v$-segment, similarly in $G^2(B_2)$. The integer combinations $3k+4l$ with $k\geq 1$ and $l \geq 0$ cover all integers greater than 8, so $H^m\gamma_u(z)$ contains a $v$-segment for all $m \geq M = i + M_0 + 9$.

    Given $\gamma_s(\zeta)$, we may find $i$ such that $H^{-i}\gamma_s(\zeta)$ contains a segment $\Gamma^s \subset S$, aligned with $v^s \in \mathcal{C}^s$. Now $\Gamma_0 := I_4(\Gamma^s)$ is a line segment in $S$, aligned with $v \in \mathcal{C}$. By the above we can find some integer $M_0$ such that $H^m(\Gamma_0)$ contains a $v$-segment for all $m \geq M_0 + 9$. Since $I_4^{-1}$ maps $v$-segments to $h$-segments, $H^{-n}\gamma_s(\zeta)$ = $(I_4^{-1} \circ H^n \circ I_4) \gamma_s(\zeta)$ contains a $h$-segment for all $n \geq N = i + M_0 + 9$.
\end{proof}

\section{Final remarks}
\label{sec:remarks}
As alluded to in the introduction, the lower bound $\alpha_0$ forms a natural barrier to analysis when relying on the canonical induced map $H_S$ (or rather its components $F_S, G_S$) for growth. It is the parameter value below which the `one-step expansion condition' of \cite{chernov_billiards_2005} fails for the map $F_S$ over unstable manifolds bounded away from the accumulation points $p_i^{(\star)}$ (the growth near which we ensure using the inductive argument, else\footnote{While unnecessary here, the inductive argument may similarly be applied to establish growth near $p_1^{(\star)}$.} map into $v$-segments by repeatedly applying Lemma \ref{lemma:Lkmapping}). Considering expansion under the full composition $H_S$ or its higher powers may widen the mixing window to some $\alpha_0'<\alpha_0$. This is no simple task, however, owing to the increased complexity of the singularity set. Further $\alpha_0'$ would always be bounded some distance away from the optimal shear parameter $\alpha = 2$, where $H_S$ and all its powers lose uniform hyperbolicity. At this parameter, the problematic region is $T = H^{-1}(S) \cap S $ on which $DH_S = DG \, DF = \left( \begin{smallmatrix}
    1 & 2 \\ -2 & -3
\end{smallmatrix} \right) $ is parabolic. Analogous to the map in \cite{myers_hill_loss_2023}, $T$ contains a pair of periodic line segments on which Lyapunov exponents are zero and nearby points may remain trapped for arbitrary long periods, introducing a new source of intermittent behaviour. Following \cite{myers_hill_loss_2023}, an appropriate induced map for establishing growth is the return map $H_{\sigma}$, where $\sigma = S \setminus H(T)$. While it is uniformly hyperbolic over $\alpha \geq 2$, the complexity of the singularity set likely precludes a concise analysis.

The other bound $\alpha<3$ of Theorem \ref{thm:Bernoulli} is however one of convenience, allowing for a more compact argument. While new accumulation points arise each time $\alpha$ surpasses an integer $k$, sets of constant return time $k$ under $F$ become (like $A_2$ in the present work) quadrilaterals with sides on $\partial S_1, F^{-k}(\partial S_1),\partial S_2, F^{-k}(\partial S_2)$. Any $\Gamma$ traversing these sets maps into a $v$-segment, so these sets divide up the analysis into cases analogous to those encountered here. Growth estimates are no more difficult to show, as expansion factors (\ref{eq:expansionFactors}) are universally larger with increasing $\alpha$.

\printbibliography


\end{document}